\documentclass[a4paper,12pt]{amsart}
\usepackage[latin1]{inputenc}  
\usepackage[T1]{fontenc} 
\parindent=0pt
\usepackage{latexsym}
\usepackage{amsmath}
\usepackage{amscd}
\usepackage[all]{xy}
\usepackage{graphics}
\usepackage{amssymb,amsthm,amsfonts}
\usepackage{syntonly}
\usepackage{pdflscape}

\theoremstyle{plain}
\parindent0cm

\setlength{\parindent}{0pt}
\setlength{\headheight}{8pt} \setlength{\textheight}{22.6cm}
\setlength{\textwidth}{18cm} \setlength{\oddsidemargin}{1.2cm}
\setlength{\evensidemargin}{1.2cm} \setlength{\topmargin}{0cm}

\newcommand{\sO}{{\mathcal O}}

\newcommand{\sV}{{\mathcal V}}



\newtheorem{theorem}{Theorem}[section]
\newtheorem{prop}[theorem]{Proposition}
\newtheorem{cor}[theorem]{Corollary}
\newtheorem{lemma}[theorem]{Lemma}

\theoremstyle{definition}

\newtheorem{defi}[theorem]{Definition}

\newtheorem{ex}[theorem]{Example}


\newcommand{\B}{{\mathbb B}}

\newcommand{\C}{{\mathbb C}}
\newcommand{\Q}{{\mathbb Q}}
\newcommand{\R}{{\mathbb R}}

\newcommand{\Z}{{\mathbb Z}}

\newcommand{\V}{{\mathbb V}}
\newcommand{\U}{{\mathbb U}}
\newcommand{\W}{{\mathbb W}}

\newcommand{\p}{{\mathbb P}}

\setcounter{tocdepth}{2}

\textwidth 5.4in

\title[Mixed Hodge Complexes and $L^2$--Cohomology]{Mixed Hodge Complexes and $L^2$--Cohomology for local systems on ball quotients}

\author{Stefan M\"uller-Stach, Xuanming Ye, Kang Zuo }
\email{stach@uni-mainz.de, zuok@uni-mainz.de, ye001@uni-mainz.de}
\begin{document}
\begin{abstract}

We study the $L^2$--cohomology of certain local systems on non-compact arithmetic ball quotients $X=\Gamma \backslash \B_n$.
In the case of a ball quotient surface $X$ we show that vanishing theorems for $L^2$--cohomology are intimately related to vanishing theorems of the type
$$
H^0(\overline{X}, S^n \Omega^1_{\overline{X}}(\log D) \otimes{\mathcal O}_{\overline{X}}(-D) \otimes (K_{\overline{X}}+D)^{-m/3})=0
$$
for $m \ge n \ge 1$ on the toroidal compactification $(\overline{X},D)$.
We also give generalizations to higher dimensional ball quotients and study the mixed Hodge structure
on the sheaf cohomology of a local system in general with the $L^2$-cohomology contributing to the lowest weight part.
\end{abstract}

\subjclass{14C25, 14F17, 14G35, 32M15}

\maketitle

\section*{Introduction}

Miyaoka~\cite{miyaoka} proved the vanishing
$$
H^0(X, S^n \Omega^1_X \otimes L^{-m})=0
$$
for  $m \ge n \ge 1$, if $X=\Gamma \backslash \B_2$ is a compact ball quotient surface
such that $K_X=\det \Omega^1_X=L^{\otimes 3}$ for some nef and big line bundle $L$.

In this paper we want to investigate more generally vanishing and non--vanishing results for the $L^2$--cohomology of
non--compact ball quotients $X=\Gamma \backslash \B_n$ in any dimension. These theorems are related
to questions about the representation theory of $SU(n,1)$.
The vanishing and non-vanishing theorems on the representation theory side were shown by Li-Schwermer \cite{ls} and Saper \cite{saper}.

The complex ball $\B_n$ is a bounded symmetric domain of type $G/K$ with $G=SU(n,1)$ and $K=U(n)$.
A \emph{ball quotient} $X=\Gamma \backslash \B_n$ is a quotient of $\B_n$ by a torsion--free discrete subgroup $\Gamma \subset SU(n,1)$.
If $\Gamma \subset SU(n,1)$ is an arithmetic subgroup, then $X$ is quasi--projective and allows a natural normal projective compactification
by adding a finite number of points (cusps) at infinity, the Baily--Borel--Satake compactification.
Canonical desingularizations $\overline{X}$ of this compactification are given by toroidal compactifications.

Examples of ball quotient surfaces are \emph{Picard modular surfaces}. Those are (components of) Shimura
varieties which parametrize abelian $3$--folds with given endomorphism algebra \cite{pms}.
In this case $\Gamma$ is a subgroup of $SU(2,1)$ with values in integers of an imaginary quadratic field $E$.
In the classical situation, already studied by Picard \cite{picard}, the abelian $3$--folds are
Jacobians of Picard curves $y^3=P(x)$ with $\deg(P)=4$.
In this way Picard modular surfaces arise as one case in Deligne--Mostow's list \cite{dm}.
We refer to \cite{mmwyz} for an explicit geometric example defined over $\Q(\sqrt{-3})$.

The toroidal compactification in the Picard modular surface case is given by resolving all cusps in a unique way
by smooth elliptic curves \cite{pms}.
In general, arithmetic ball quotients $X=\Gamma \backslash \B_n$ may be embedded into ${\mathcal A}_g$, the moduli space of
abelian varieties, for a suitable $g$. In this case the toroidal
compactification $\overline{X}$ is unique with disjoint abelian varieties as boundary strata \cite{he, yo}.

\medskip
In this paper we study $L^2$--cohomologies of complex local systems $\V$ on non--compact arithmetic
ball quotients $X=\Gamma \backslash \B_n$ with a toroidal compactification $\overline{X}$.
We will assume throughout this paper that $\Gamma$ is torsion-free,
so that $X$ is smooth and $\Gamma$ is the fundamental group. There is the standard representation
$$
\rho: \Gamma \longrightarrow GL(n+1,\C)
$$
and we assume that it has unipotent monodromies around each component of the boundary divisor $D$ of a toroidal
compactification $\overline{X}$ of $X$. This can always be achieved by passing to a subgroup of finite index in $\Gamma$.
We denote the associated local system on $X$ by $\V_1$, and  call it {\sl uniformizing}. The dual local system is denoted by $\V_2$.
Both local systems have rank $n+1$ on $X$. The sum of all Galois conjugates of $\V_1$ contains $\V_2$ and is defined over $\Q$.

The uniformizing local system $\V_1$ underlies a complex VHS on $X$, also denoted by $\V_1$, which is defined over a number field.
The vector bundle $\V_1 \otimes {\mathcal O}_X$ has a canonical Deligne extension to a vector bundle
${\mathcal V}_1$ on $\overline{X}$ together with holomorphic subbundles $F^p \subset {\mathcal V}_1$. It carries
a logarithmic connection $\overline{\nabla}: {\mathcal V}_1 \to {\mathcal V}_1 \otimes \Omega^1_{\overline{X}}(\log D)$.

To $\V_1$, or more generally to any local system $\W$ on $X$ underlying a complex VHS, there
corresponds a \emph{logarithmic Higgs bundle} $E$ on $\overline{X}$ via the Simpson correspondence.
$E$ can be defined as the graded object
\[
E=\bigoplus E^{p,q},
\]
where
\[
E^{p,q}=F^p/F^{p+1}.
\]
If $\W=\V_1$ was the uniformizing local system on $X$, $E$ is called the uniformizing Higgs bundle.
The associated Higgs operator $\theta={\rm gr} \overline{\nabla}$ is a homomorphism of vector bundles
$$
\theta: E \longrightarrow E \otimes \Omega_{\overline{X}}^1(\log D),
$$
which satisfies $\theta \wedge \theta=0$, and Griffiths transversality, i.e.,
$\theta(E^{p,q}) \subset E^{p-1,q+1} \otimes \Omega_{\overline{X}}^1(\log D)$.

Any Higgs bundle $E=\bigoplus E^{p,q}$ gives rise to a complex of vector bundles
$$
E {\buildrel \theta \over \longrightarrow} E \otimes \Omega_{\overline{X}}^1(\log D)
{\buildrel \wedge \theta \over \longrightarrow} E \otimes \Omega_{\overline{X}}^2(\log D)
{\buildrel \wedge \theta \over \longrightarrow} \cdots {\buildrel \wedge \theta \over \longrightarrow}
E \otimes \Omega_{\overline{X}}^{\dim(X)}(\log D).
$$
The hypercohomology of this complex computes the sheaf cohomology
$$
H^*(X,\W)
$$
and the complex itself carries a weight filtration (see \cite{El2} for both facts), giving rise to a MHS on $H^*(X,\W)$, defined first by Deligne, Zucker and Saito.
There is an algebraic subcomplex
$$
\Omega_{(2)}^*(E) \subset E \otimes \Omega_{\overline{X}}^*(\log D)
$$
whose hypercohomology is isomorphic to the $L^2$- resp. intersection cohomology of $\W$
$$
H^*_{L^2}(X,\W)=IH^*(\overline{X},\W)
$$
by a result of Zucker \cite{Zuc2} and Jost-Yang-Zuo \cite{jyz}. We denote this cohomology group also by
$$
H^*_{L^2}(\overline{X},(E,\theta)).
$$
There is a natural map from $L^2$--cohomology to sheaf cohomology, factoring surjectively through the weight zero part \cite{ps}:
\[
 H^*_{L^2}(X,\W) \twoheadrightarrow W_0 H^*(X,\W) \hookrightarrow H^*(X,\W).
\]

\medskip
Now let us describe the results of this paper: First we consider the case $n=2$ of arithmetic ball quotient surfaces
$X=\Gamma \backslash \B_2$ where $\Gamma \subset SU(2,1)$ is an arithmetic subgroup.
Over $X$ one has two complex local systems $\V_1$ and $\V_2$ of rank $3$ defined over some number field.
Both are complex conjugate and dual to each other, and the sum $\W=\V_1 \oplus \V_2$ is a real local system.
We will always assume that $\Gamma$ is small enough so that $X$ is smooth and $\V_1$ and $\V_2$ have unipotent monodromies at infinity.
Under this assumption there is a line bundle $L$ on $\overline{X}$ such that $L^3=K_{{\overline X}}+D$ \cite{mmwyz}.
We may assume that $\V_1$ is the uniformizing local system corresponding to the standard representation
of $\Gamma$ in $SU(2,1)$, and we let $(E_1,\theta)$ be the corresponding Higgs bundle.
The Higgs bundle corresponding to $\V_2$ is denoted by $E_2$. Our first result concerns the $L^2$-cohomology of $E_1$:

\begin{theorem} Assume that $\overline{X}$ is an (arithmetic) ball quotient surface with the assumption that $\Gamma$ is sufficiently small. Then
we have:
\begin{itemize}
\item $H^0({\overline X}, \Omega^1_{\overline X}(\log D) \otimes \Omega^1_{\overline X} \otimes L^{-1})=0$ implies $IH^1(X,\V_1)=0$.
\item $IH^1(X,\V_1)=0$ implies $H^0({\overline X}, S^2 \Omega^1_{\overline X} (\log D)\otimes{\mathcal O}_{\overline{X}}(-D) \otimes L^{-1})=0$.
\end{itemize}
If $\Gamma$ is sufficiently small, then $H^1_{L^2}(E_1)=IH^1(X,\V_1)$ can be non--zero.
\end{theorem}

The non--vanishing statement for $\Gamma$ sufficiently small is due to Kazdan \cite{k} and holds for certain types of ball quotients.
However, for compact ball quotients of certain quaternionic types, examples with vanishing cohomology were found by Kottwitz \cite{ko}.
The local systems $\V_1$ and $\V_2$ are Galois conjugate to each other, hence one vanishes if and only if the other does.
However the vanishing of $H^1_{L^2}(E_2)$ has a different interpretation:

\begin{theorem} Assume that $\overline{X}$ is an (arithmetic) ball quotient surface with the assumption that $\Gamma$ is sufficiently small. Then
$IH^1(X,\V_1)=0$ if and only if $IH^1(X,\V_2)=0$. If these vanish, it implies that
$$
H^0({\overline X}, S^2 \Omega^1_{\overline X}(\log D)\otimes{\mathcal O}_{\overline{X}}(-D) \otimes L^{-2})
= H^1({\overline X}, \Omega^1_{\overline X}(\log D)\otimes{\mathcal O}_{\overline{X}}(-D) \otimes L^{-2})=0.
$$
\end{theorem}

In the sequel we also look at different Schur functors applied to $\V_1$, e.g., $S^2 \V^1$
and $\Lambda^3 \V_1$.
More generally, we consider the most general Schur functors $\W_{a,b}$ which are defined as kernels of natural maps
$$
S^a \V_1 \otimes  S^b \V_2 \longrightarrow S^{a-1} \V_1 \otimes S^{b-1} \V_2.
$$
We refer to \cite{fh} for this notation. As a result we get:

\begin{theorem} Assume that $\overline{X}$ is an (arithmetic) ball quotient surface.
Then one has $H^0(\overline{X} ,S^n \Omega_{\overline{X}}^1(\log D)\otimes{\mathcal O}_{\overline{X}}(-D) \otimes L^{-m})=0$ for all $m \ge n \ge 1$.
\end{theorem}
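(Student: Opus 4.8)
The plan is to prove the vanishing directly at the level of holomorphic sections, by reading $S^n\Omega^1_{\overline X}(\log D)(-D)\otimes L^{-m}$ as a single Hodge-graded piece of the uniformizing variation $S^m\V_1=\W_{m,0}$ and then ruling out sections by polystability of the associated parabolic Higgs bundle. Concretely, the uniformizing Higgs bundle splits as $E_1=E^{1,0}\oplus E^{0,1}$ with $E^{0,1}=L^{-1}$ and $E^{1,0}=\Omega^1_{\overline X}(\log D)\otimes L^{-1}$, and $\theta$ is \emph{maximal}, i.e.\ it induces an isomorphism $E^{1,0}\xrightarrow{\ \sim\ }E^{0,1}\otimes\Omega^1_{\overline X}(\log D)$; this is exactly the statement that the period map is an immersion, and it forces $\det E_1$ to be numerically trivial, consistently with $3L=K_{\overline X}+D$. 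Taking symmetric powers, the Higgs bundle $S^mE_1$ of $S^m\V_1$ has graded pieces
$$
(S^mE_1)^{p,\,m-p}\;=\;S^p\Omega^1_{\overline X}(\log D)\otimes L^{-m},\qquad 0\le p\le m,
$$
so that the bundle in the statement is precisely the $(n,m-n)$-piece. Here the inequality $m\ge n$ is not an accident: it is exactly the condition for $S^n\Omega^1_{\overline X}(\log D)\otimes L^{-m}$ to occur as a Hodge-graded piece of a genuine weight-$m$ variation, whereas the borderline $n=m+1$ is the range of the first theorem above, where non-vanishing (Kazdan) takes place and the object lives in the first Higgs-complex cohomology rather than in the Higgs bundle itself.

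The mechanism for the vanishing is then the following. By the Simpson--Jost--Yang--Zuo correspondence used throughout, and because $S^m\V_1$ is a semisimple local system with unipotent monodromy along $D$, the parabolic Higgs bundle $(S^mE_1,\theta)$ attached to its Deligne extension is polystable of parabolic degree $0$; moreover $H^0$ of a graded piece with the $(-D)$-twist computes exactly the $L^2$-sections, so the parabolic machinery applies verbatim to sections in $H^0(\overline X,S^n\Omega^1_{\overline X}(\log D)(-D)\otimes L^{-m})$. Suppose such a section $s$ is nonzero. Then $s$ generates a nonzero sub-Higgs-sheaf $\mathcal F\subset S^mE_1$, obtained by saturating $\mathcal O_{\overline X}\cdot s$ together with all its images under iterated $\theta$. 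Polystability of parabolic degree $0$ gives $\deg_{\mathrm{par}}\mathcal F\le 0$, with equality only if $\mathcal F$ is a flat direct summand. On the other hand, since $n\ge 1$ and $\theta$ is maximal, $\theta(s)\ne 0$ in the $(n-1,m-n+1)$-piece tensored with $\Omega^1_{\overline X}(\log D)$; because $\Omega^1_{\overline X}(\log D)$ carries positive parabolic degree (as $3L=K_{\overline X}+D$ is nef and big), the $\theta$-descendants force $\deg_{\mathrm{par}}\mathcal F>0$. This contradiction shows $s=0$. I expect the full range of $(m,n)$, as well as the refinements in sections 5--9, to be organized uniformly through the Schur functors $\W_{a,b}$ and the conjugate Higgs bundle $E_2$, but the special case $\W_{m,0}$ already carries all the bundles appearing in the statement.

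The main obstacle will be the boundary bookkeeping, i.e.\ making the two parabolic-degree estimates exact along the toroidal divisor $D$. One must (i) compute the parabolic structure of $\Omega^1_{\overline X}(\log D)$ and of the graded pieces of $S^mE_1$ at the abelian (elliptic) boundary strata, so that $\deg_{\mathrm{par}}(\Omega^1_{\overline X}(\log D))>0$ in the same normalization in which $(S^mE_1,\theta)$ has parabolic degree $0$; (ii) check that the $(-D)$-twist matches the Deligne weights precisely, so that $H^0(\cdots(-D))$ is genuinely the space of $L^2$-sections to which polystability applies; and (iii) verify that $\theta$ remains a bundle injection on the relevant graded piece up to and along $D$, so that the strict positivity $\deg_{\mathrm{par}}\mathcal F>0$ survives at the cusps rather than being cancelled by the lower pieces. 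Once these local computations at the toroidal boundary are in place, the slope inequality closes and the vanishing follows for all $m\ge n\ge 1$. As a consistency check, the dichotomy between $m\ge n$ (vanishing) and $n=m+1$ (Kazdan non-vanishing) matches the representation-theoretic picture of Li--Schwermer and Saper, the vanishing range lying outside the bidegrees realized by non-trivial cohomological representations of $SU(2,1)$.
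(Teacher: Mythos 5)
Your reduction to the Higgs bundle $S^m E_1$ and your formula for its graded pieces are correct, but the engine of your proof fails. The claim that the sub-Higgs sheaf $\mathcal F$ generated by a nonzero section $s$ of the $(n,m-n)$-piece must have $\deg_{\mathrm{par}}\mathcal F>0$ is unjustified, and it is false. The $\theta$-descendants of $s$ are obtained by contracting $\theta(s)$ with sections of $T_{\overline{X}}(-\log D)$, a bundle of \emph{negative} degree, so the positivity of $\Omega^1_{\overline{X}}(\log D)$ works against you, not for you: descendants live in lower symmetric powers with the same twist $L^{-m}$ and have smaller degree. Test the case $n=m=1$: a section $s$ of $\Omega^1_{\overline{X}}(\log D)(-D)\otimes L^{-1}=E_1^{1,0}(-D)$ with divisorial zero locus $Z_s\supseteq D$ generates the sub-Higgs sheaf whose saturation is $\mathcal O_{\overline{X}}(Z_s)\oplus L^{-1}(-Z_s)$, of total degree $-L\cdot H<0$ for any polarization $H$ --- perfectly consistent with stability, no contradiction. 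Indeed semistability of $(S^mE_1,\theta)$ (which does hold, with trivial parabolic structure since the monodromy is unipotent) says \emph{exactly} that every sub-Higgs sheaf has degree $\le 0$, so the contradiction you want can never come out of polystability alone. Quantitatively, Higgs semistability of $S^mE_1$ reduces to semistability of $\Omega^1_{\overline{X}}(\log D)$, and the slope count (a section vanishing on $D$ gives $\mathcal O_{\overline{X}}(D)\hookrightarrow S^n\Omega^1_{\overline{X}}(\log D)\otimes L^{-m}$, whose slope is $(\tfrac{3n}{2}-m)\,L\cdot H$) only yields vanishing for $m\ge \tfrac{3n}{2}$. In the hard range $n\le m<\tfrac{3n}{2}$ stability is simply too weak: it holds for the Higgs bundle of \emph{any} irreducible local system on \emph{any} surface, whereas the theorem is specific to ball quotients.

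The paper closes this gap with an arithmetic input your argument lacks entirely: the vanishing $IH^1(\overline{X},\W_{a,b})=0$ for $a,b>0$ (Ragunathan, Li--Schwermer, Saper), valid precisely because $\W_{a,b}$ has \emph{regular} highest weight. Concretely, the paper takes the Higgs bundle $E_{a,b}\subseteq S^aE_1\otimes S^bE_2$, observes that $S^{a+b+1}\Omega^1_{\overline{X}}(\log D)\otimes L^{-a-2b}$ sits in degree one of its $L^2$-Higgs complex and is neither hit by nor killed by $\theta$, so its $L^2$-sections inject into $H^1_{L^2}=IH^1$, which vanishes; then $n=a+b+1$, $m=a+2b$. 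Note that the paper deliberately avoids your choice $\W_{m,0}=S^m\V_1$: its highest weight is not regular, so the representation-theoretic vanishing does not apply (indeed $IH^1(X,\V_1)\ne 0$ for small $\Gamma$ by Kazdan), and moreover the graded pieces $S^n\Omega^1_{\overline{X}}(\log D)\otimes L^{-m}$ with $n\le m$ cancel in the Higgs complex of $S^mE_1$ and so do not compute any cohomology of $S^m\V_1$ --- which is why, in your setup, there is no cohomological vanishing theorem to appeal to and the whole weight falls on the (failing) slope estimate. To repair your proof you would have to either re-embed the bundle into the Higgs cohomology of a regular-weight system as the paper does, or replace polystability by the curvature of the Bergman metric as in Miyaoka's compact case.
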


The twist by $(-D)$ in the theorem is too strong and the proof will give a slightly better result.
The proof uses a vanishing theorem of Ragunathan, Li--Schwermer \cite{ls} and Saper \cite{saper}, since
$\W_{a,b}$ has regular highest weight if and only if $a,b >0$.

In section~\ref{highervan} we give generalizations of the previous results to higher--dimensional arithmetic
ball quotients $X=\Gamma \backslash \B_n$.
For example we compute the Higgs cohomology of the uniformizing Higgs bundle $E_1$ and its symmetric powers $S^k E_1$:

\begin{theorem} The $L^2$--Higgs complex for the symmetric powers $(S^kE_1,\theta)$ with $k \ge 1$ is quasi--isomorphic to
$$
0 \to  T^0(k) {\buildrel 0 \over \to}  T^1(k) {\buildrel 0 \over \to} \to \cdots {\buildrel 0 \over \to}  T^n(k) \to 0,
$$
with trivial differentials, where $T^i(k)$ is the sheaf of $L^2$--sections of
$$
{\rm Ker}\left( S^k \Omega_{\overline{X}}^1(\log D) \otimes L^{-k} \otimes
\Omega^i_{\overline{X}}(\log D) \to S^{k-1} \Omega_{\overline{X}}^1(\log D) \otimes L^{-k}
\otimes \Omega^{i+1}_{\overline{X}}(\log D) \right)
$$
for $i=1,\ldots,n$. For $i=0$ we get $T_0(k)=L^{-k}$.\\
\end{theorem}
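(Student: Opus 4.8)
The plan is to identify the $L^2$--Higgs complex of $(S^kE_1,\theta)$ with a twisted co--Koszul (polynomial de Rham) complex and to read off its cohomology from the Cartan homotopy formula. I would start from the structure of the uniformizing Higgs bundle, $E_1=E^{1,0}\oplus E^{0,1}$, where $E^{0,1}\cong L^{-1}$ is a line bundle and $\theta$ induces an isomorphism $E^{1,0}\cong E^{0,1}\otimes\Omega^1_{\overline{X}}(\log D)$ of the Deligne extensions. Taking symmetric powers then gives a Hodge decomposition $S^kE_1=\bigoplus_{p=0}^{k}(S^kE_1)^{p}$ with $(S^kE_1)^{p}\cong L^{-k}\otimes S^p\Omega^1_{\overline{X}}(\log D)$, and the induced Higgs field $\theta_k$ carries the $(p,i)$--piece $L^{-k}\otimes S^p\Omega^1_{\overline{X}}(\log D)\otimes\Omega^i_{\overline{X}}(\log D)$ into the $(p-1,i+1)$--piece by comultiplying in the symmetric factor and wedging the freed $1$--form into $\Omega^i\ra\Omega^{i+1}$. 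Making this identification precise turns $\theta_k$ into $L^{-k}$ tensored with the standard co--Koszul differential on $S^\bullet\Omega^1_{\overline{X}}(\log D)\otimes\Lambda^\bullet\Omega^1_{\overline{X}}(\log D)$.

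Next I would exploit that $\theta_k$ preserves the total degree $s=p+i$, so that the whole Higgs complex splits as a direct sum over $s\ge0$ of subcomplexes $C_s^\bullet$, the $i$--th term of $C_s^\bullet$ being the $L^2$--sections of $L^{-k}\otimes S^{s-i}\Omega^1_{\overline{X}}(\log D)\otimes\Omega^i_{\overline{X}}(\log D)$; each $C_s^\bullet$ is the brutal truncation to $p=s-i\le k$ of the co--Koszul complex in total degree $s$. Here I would invoke the Cartan identity $d_K\theta_k+\theta_k d_K=s\cdot\op{id}$ on $S^\bullet V\otimes\Lambda^\bullet V$ with $V=\Omega^1_{\overline{X}}(\log D)$ and $d_K$ the contraction--multiplication Koszul differential: for $s\ge1$ the operator $\tfrac1s d_K$ is a contracting homotopy, so the untruncated co--Koszul complex in degree $s$ is exact. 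Consequently, for $1\le s\le k$ the subcomplex $C_s^\bullet$ is exact and contributes nothing; for $s=0$ it is the single sheaf $L^{-k}=T^0(k)$ in degree $0$; and for $s>k$ the truncation concentrates the cohomology in the bottom degree $i=s-k$, where it equals $\Ker(\theta_k)$ on the $p=k$ piece, which is by definition $T^{s-k}(k)$. Summing over $s$ yields $H^i=T^i(k)$ for every $i$, and because the decomposition is by subcomplexes each with cohomology in a single degree, the total complex is quasi--isomorphic to $\bigoplus_i T^i(k)[-i]$, i.e.\ to a complex with trivial differentials, as claimed.

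The interior co--Koszul formalism above is standard; the real content --- and the main obstacle --- is upgrading it to the $L^2$ level near the boundary $D$. I would need to check, first, that the grading by $s$ is compatible with the $L^2$--structure, so that $\Omega_{(2)}^*(S^kE_1)$ genuinely splits as $\bigoplus_s C_{s,(2)}^\bullet$; and second, that the homotopy $\tfrac1s d_K$, which is a metric--independent holomorphic bundle map, is bounded in the Hodge $L^2$--norms, so that algebraic exactness of $C_s^\bullet$ passes to $L^2$--exactness and $\Ker(\theta_k)$ passes to its sheaf of $L^2$--sections. Both points reduce to comparing the Hodge norms of the graded pieces $L^{-k}\otimes S^p\Omega^1_{\overline{X}}(\log D)\otimes\Omega^i_{\overline{X}}(\log D)$ for fixed $s=p+i$; using the Poincar\'e--type local form of the uniformizing Hodge metric near the toroidal boundary, whose strata are abelian varieties, I expect these norms to be mutually comparable, so that $d_K$ and the truncation respect the growth conditions defining $\Omega_{(2)}^*$. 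Establishing these boundary estimates is where I expect the difficulty to lie.
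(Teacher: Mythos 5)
Your proposal is correct and takes essentially the same approach as the paper: there too one decomposes $S^kE_1$ into the graded pieces $S^p\Omega^1_{\overline{X}}(\log D)\otimes L^{-k}$, $0\le p\le k$, and deduces from the exactness in the middle of the resulting Koszul (Eagon--Northcott type) complexes --- the paper cites Green where you use the Cartan homotopy $d_K\theta_k+\theta_k d_K=s\cdot\mathrm{id}$ --- that all cohomology concentrates at the ends, giving $L^{-k}$ in degree $0$ and the kernels $T^i(k)$ in degrees $i\ge 1$. The $L^2$-boundary compatibility that you flag as the main remaining difficulty is not addressed in the paper's proof either, which argues purely algebraically and simply defines $T^i(k)$ as the sheaves of $L^2$-sections of those kernels; so your treatment is, if anything, the more scrupulous of the two.
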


Here, by sheaves of $L^2$-sections we mean the sections of the subsheaves arising in the subcomplex $\Omega_{(2)}^*(E)$.
We prove a similar theorem for the dual Higgs bundle $E_2$ in the same section.

\medskip
In the final section we look  at the weight filtration $W_\bullet$ on the Higgs complex $E \otimes \Omega^\bullet_{\overline{X}}(\log D)$
on a ball quotient surface $X$ where $E$ is the Higgs bundle corresponding to the real VHS $\W=S^k\V_1 \oplus S^k\V_2$.
Together with the Hodge filtration it computes the mixed Hodge structure on $H^*(X,\W)$. The natural map
$IH^*(\overline{X},\W) \longrightarrow H^*(X,\W)$
maps onto the weight zero part $W_0H^*(X,\W)$ by \cite{ps}. Our goal is to compute the mixed Hodge numbers $h^{p,q}$ of this Hodge structure.
To do this, we first compute the weight graded pieces of the complex $E \otimes \Omega^\bullet_{\overline{X}}(\log D)$. Using this and the comparison map
from intersection cohomology we prove the following results:

\begin{theorem}
The mixed Hodge numbers of $H^{l}(X, S^{k}\V_{1}\oplus
S^{k}\V_{2}), 0\leq l \leq 4$ above weight $l+k$ depend only on
the number of boundary components $h$. More precisely: \\
1. For $l=3$, the only non--zero mixed Hodge number is
$h^{k+2,k+2}_{3}=2h.$ \\
2. For $l=2$, the only non--zero mixed Hodge numbers are
$h^{k+2,1}_{2}=h^{1,k+2}_{2}\leq h$ and
$h^{k+2,k+1}_{2}=h^{k+1,k+2}_{2}= h$. \\
Furthermore, if
$$
H^{1}(X, S^{k}\V_{1}\oplus S^{k}\V_{2})=0,
$$
then $h^{k+2,1}_{2}=h^{1,k+2}_{2} = h.$\\
3. For $l=1$ one has
$$
H^{1}(X, S^{k}\V_{1}\oplus S^{k}\V_{2}) = W_{0}H^{1}(X, S^{k}\V_{1}\oplus S^{k}\V_{2}).
$$
\end{theorem}

\begin{theorem}
The mixed Hodge numbers of $H^{l}(X, {\rm End}^0(\V_1)), 0\leq l
\leq 4$ above weight $l+2$ depend only on the
number of boundary components $h$. More precisely: \\
1. For $l=3$, the nonzero mixed Hodge number is only
$h^{4,4}_{3}=h.$ \\
2. For $l=2$, the nonzero mixed Hodge numbers are only
$h^{4,2}_{2}=h^{2,4}_{2}= h.$ \\
\end{theorem}

\section{The $L^2$--cohomology of certain local systems on Picard modular surfaces}
\label{L2}

In this section we study the $L^2$--cohomology of certain local systems on an arithmetic ball quotient surface, and study their (non)--vanishing.

\subsection{The Simpson correspondence in the simplest case}
\label{higgsbasics}

Let $X$ be a smooth, quasiprojective variety and $\overline{X}$ be a smooth compactification with normal crossing boundary divisor $D$.
There is a categorical correspondence between direct sums of local systems on $X$ and polystable logarithmic Higgs bundles on $\overline{X}$ with vanishing
Chern classes that goes back to Hitchin, Donaldson, Uhlenbeck-Yau and Simpson, see \cite{s} for a reference.
It associates to any irreducible local system $\W$ on $X$ underlying a complex VHS and with unipotent monodromies at infinity a logarithmic Higgs bundle
$(E,\theta)$ where $E$ is a vector bundle on $\overline{X}$ and
\[
 \theta: E \to E \otimes \Omega_{\overline{X}}^1(\log D)
\]
which, in addition, satisfies $\theta \wedge \theta=0$.

Under these assumptions it is constructed as follows: the vector bundle ${\mathcal W}=\W \otimes {\mathcal O}_X$ has a canonical Deligne extension to a vector bundle
$\overline{\mathcal W}$ on $\overline{X}$ together with holomorphic subbundles $F^p \subset \overline{\mathcal W}$.
The associated Higgs bundle $E$ can be defined as the graded object
\[
E=\bigoplus E^{p,q},
\]
where
\[
E^{p,q}=F^p/F^{p+1}
\]
together with the graded extended Gau{\ss}--Manin connection $\overline{\nabla}$ as Higgs operator $\theta$.
The Higgs operator $\theta$ is a homomorphism of vector bundles
$$
\theta: E \longrightarrow E \otimes \Omega_{\overline{X}}^1(\log D),
$$
which satisfies $\theta \wedge \theta=0$ and Griffiths transversality, i.e.,
$\theta(E^{p,q}) \subset E^{p-1,q+1} \otimes \Omega_{\overline{X}}^1(\log D)$.

\subsection{The uniformizing Higgs bundles on Picard modular surfaces}
\label{higgspicard}

Let $\overline{X}$ be a toroidal compactification of an arithmetic ball quotient surface $X=\Gamma \backslash \B_2$ \cite{pms}.
There are two local systems $\V_1$ and $\V_2$ on $X=\overline{X} \setminus D$.
Both are $3$--dimensional, are complex conjugate and dual to each other, and underly a complex VHS which is defined over a number field.
The standard representation of $\Gamma \subset SU(2,1)$ corresponds to $\V_1$ without loss of generality.
We assume throughout that $\Gamma$ is torsion-free and the monodromy at infinity, i.e., around the elliptic cusp divisors, is unipotent.
This implies that $K_{\overline{X}}+D$ is divisible by $3$ and we define $L$ to be a third root, so that
$K_{\overline{X}}+D=L^{\otimes 3}$ \cite{mmwyz}. $L$ is a nef and big line bundle.
The Deligne extensions of the vector bundles ${\mathcal V_i}:=\V_i \otimes {\mathcal O}_{X}$ to $\overline{X}$ are denoted by
$\overline{\mathcal V_i}$. The corresponding Higgs bundle are denoted by $E_i$. The local system $\W=\V_1 \oplus \V_2$ is real and corresponds
to the Higgs bundle $E=E_1 \oplus E_2$. By \cite{mmwyz}
we may assume that the Higgs bundle $E_1$ corresponding to $\V_1$ is
$$
E_1 = E_1^{1,0} \oplus E_1^{0,1} = \left( \Omega^1_{\overline{X}}(\log D) \otimes L^{-1} \right) \oplus L^{-1},
$$
where $L^3=\det \Omega^1_{\overline{X}}(\log D) = {\mathcal O}_{\overline{X}}(K_{\overline{X}}+D)$ and the Higgs bundle corresponding to $\V_2$ is
$$
E_2 = E_2^{1,0} \oplus E_2^{0,1} = L \oplus \left( \Omega^1_{\overline{X}}(\log D) \otimes L^{-2} \right).
$$
Note that $E_2^{0,1}=T_{\overline{X}}(-\log D) \otimes L$. The non--zero part of the Higgs operator for $E_1$ is given by
$$
\theta=id: E_1^{1,0}= \Omega^1_{\overline{X}}(\log D) \otimes L^{-1} \to  E_1^{0,1} \otimes \Omega^1_{\overline{X}}(\log D)
=L^{-1} \otimes \Omega^1_{\overline{X}}(\log D).
$$
For $E_2$
$$
\theta: E_2^{1,0}=L \to E_2^{0,1} \otimes \Omega^1_{\overline{X}}(\log D)=T_{\overline{X}}(-\log D) \otimes L \otimes \Omega^1_{\overline{X}}(\log D)
$$
is the inclusion onto $L \subset T_{\overline{X}}(-\log D) \otimes L \otimes \Omega^1_{\overline{X}}(\log D)$ dual to the contraction operator.
The decomposition of $E$ into Hodge types is therefore
$$
E^{1,0}=E_1^{1,0} \oplus E_2^{1,0}=\left( \Omega^1_{\overline{X}}(\log D) \otimes L^{-1} \right) \oplus L,
$$
$$
E^{0,1}=E_1^{0,1} \oplus E_2^{0,1}=L^{-1} \oplus \left(\Omega^1_{\overline{X}}(\log D) \otimes L^{-2} \right).
$$
Taking the determinant implies:
\begin{lemma} $\det E^{1,0}=L^2$ and
$\det E^{0,1}=L^{-2}$.
\end{lemma}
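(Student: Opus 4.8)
The plan is to compute both determinants directly from the explicit descriptions of $E^{1,0}$ and $E^{0,1}$ as direct sums, using two elementary facts about determinants of vector bundles. First, the determinant is multiplicative on direct sums: $\det(V \oplus W) = \det V \otimes \det W$. Second, for a vector bundle $V$ of rank $r$ and a line bundle $M$ one has $\det(V \otimes M) = (\det V) \otimes M^{\otimes r}$. The only geometric input needed beyond these formulas is that $X$ is a surface, so that $\Omega^1_{\overline{X}}(\log D)$ has rank $2$, together with the identity $\det \Omega^1_{\overline{X}}(\log D) = {\mathcal O}_{\overline{X}}(K_{\overline{X}}+D) = L^{\otimes 3}$ recorded above.

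First I would handle $E^{1,0} = \left(\Omega^1_{\overline{X}}(\log D) \otimes L^{-1}\right) \oplus L$. Since $\Omega^1_{\overline{X}}(\log D)$ has rank $2$, the second formula gives $\det\left(\Omega^1_{\overline{X}}(\log D) \otimes L^{-1}\right) = L^{3} \otimes L^{-2} = L$, while the determinant of the line bundle summand $L$ is $L$ itself. Multiplying via the first formula yields $\det E^{1,0} = L \otimes L = L^{2}$. Next I would treat $E^{0,1} = L^{-1} \oplus \left(\Omega^1_{\overline{X}}(\log D) \otimes L^{-2}\right)$ in the same way: the rank-$2$ tensor summand contributes $L^{3} \otimes L^{-4} = L^{-1}$, the line bundle summand $L^{-1}$ contributes itself, and their product is $L^{-2}$. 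This establishes both claims.

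There is essentially no obstacle here; the entire content is the bookkeeping of how the twist $L^{-1}$ (resp. $L^{-2}$) gets squared when passing through a rank-$2$ determinant, which is exactly what makes the cancellations with $L^{3} = \det \Omega^1_{\overline{X}}(\log D)$ produce the clean answers. As a consistency check I would note that $\det E^{1,0} \otimes \det E^{0,1} = L^{2} \otimes L^{-2} = {\mathcal O}_{\overline{X}}$, so that $\det E = {\mathcal O}_{\overline{X}}$, reflecting the fact that $E = E^{1,0} \oplus E^{0,1}$ arises from a representation into the special unitary group $SU(2,1)$. The same computation applied to the two Hodge summands separately gives $\det E_1 = \det E_2 = {\mathcal O}_{\overline{X}}$, which is the analogous statement for each of the two rank-$3$ factors.
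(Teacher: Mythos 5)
Your computation is correct and is exactly the paper's (implicit) argument: the paper simply takes determinants of the displayed decompositions $E^{1,0}=\left(\Omega^1_{\overline{X}}(\log D)\otimes L^{-1}\right)\oplus L$ and $E^{0,1}=L^{-1}\oplus\left(\Omega^1_{\overline{X}}(\log D)\otimes L^{-2}\right)$, using $\det\Omega^1_{\overline{X}}(\log D)=L^{3}$ and the rank-$2$ twist formula, just as you do. Your consistency check $\det E_1=\det E_2={\mathcal O}_{\overline{X}}$ is a nice addition but not part of the paper's proof.
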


\medskip For a general arithmetic ball quotient surface $\overline{X}$ in Deligne--Mostow's list \cite{dm} the canonial Higgs bundle associated
to a universal family $f$ can have more direct summands than $\V_1$  and $\V_2$ as in the following example.

\begin{ex}
Consider the family of cyclic $5:1$ covers $C \to \p^1$ of genus $6$ ramified along $5$ points. Then the eigenspace decomposition for
$\Z/5\Z$ implies $R^1f_*\C =\V_1 \oplus \V_2 \oplus \U_1 \oplus \U_2$, where $\U_1, \U_2$ are unitary local systems. We may again assume that $\V_1$
is the standard representation. All four local systems have rank three and are Galois conjugate to each other.
The unitary local systems correspond to Higgs bundles with $\theta=0$.
\end{ex}

\subsection{Definition of $L^2$--Higgs cohomology on surfaces} \ \\

For ball quotients $\overline{X}$ of arbitrary dimension we have that the
boundary divisor $D$ in the toroidal compactification is smooth~\cite{he,yo}.

\medskip
The $L^2$--Higgs cohomology $H^i_{L^2}(\overline{X}, (E,\theta))$ of any logarithmic Higgs bundle $(E,\theta)$
on a surface $\overline{X}$ may be computed via the hypercohomology of a complex of algebraic sheaves \cite{jyz, Zuc2}
$$
\Omega^0(E)_{(2)} \to  \Omega^1(E)_{(2)} \to  \Omega^2(E)_{(2)},
$$
In our case, where $D$ is smooth in particular, one has \cite[Appendix]{mmwyz}:
\begin{eqnarray*}
\Omega^0(E)_{(2)} &=&{\text{Ker}}N_1={\text{Ker}}({\text{Res}}(\theta)), \text{ where } N_1={\text{Res}}(\theta), \\
\Omega^1(E)_{(2)} &=&{\frac{dz_1}{z_1}}\otimes z_1 E+dz_2\otimes {\text{Ker}}N_1=dz_1 \otimes E + dz_2 \otimes {\text{Ker}}N_1,\\
\Omega^2(E)_{(2)} &=&{\frac{dz_1}{z_1}}\wedge dz_2\otimes z_1 E= \Omega^2_{\overline{X}}\otimes E.
\end{eqnarray*}

The shape of these sheaves arises from $L^2$--conditions in the Poincar\'e metric
on $X$ \cite{jyz, Zuc2}. We have an isomorphism of cohomology groups
$$
H^i_{L^2}(\overline{X}, (E,\theta)) = IH^i(\overline{X},\W)
$$
for any Higgs bundle $(E,\theta)$ underlying a complex VHS with local system $\W$ \cite{jyz,Zuc2}, i.e.,
$L^2$--Higgs and intersection cohomology on $\overline{X}$ are isomorphic.

\medskip

Now let $E_1=\Omega^1_{\overline X}(\log D)\otimes L^{-1} \oplus L^{-1}$
be the uniformizing Higgs bundle as above. Taking $v$ as the generating section of $L^{-1}$,
${\frac{dz_1}{z_1}}\otimes v, dz_2\otimes v$ as the generating
sections of $\Omega^1_{\overline X}(\log D)\otimes L^{-1}$, then the Higgs field
\[
\theta: E\to E \otimes\Omega^1_{\overline X}(\log D)
\]
is defined by setting $\theta({\frac{dz_1}{z_1}}\otimes
v)=v\otimes{\frac{dz_1}{z_1}}$, $\theta(dz_2\otimes v)=v\otimes
dz_2$, and $\theta(v)=0$. If $\theta$ is written as
$N_1{\frac{dz_1}{z_1}}+N_2dz_2$, then
$N_1({\frac{dz_1}{z_1}}\otimes v)=v$, $N_1(dz_2\otimes v)=0$,
$N_1(v)=0$, $N_2({\frac{dz_1}{z_1}}\otimes v)=0$, $N_2(dz_2\otimes
v)=v$, $N_2(v)=0$; the kernel of $N_1$ is the subsheaf generated
by $dz_2\otimes v$, $\frac {dz_1}{z_1} \otimes z_1 v$  and $v$, hence
$\left( \Omega^1_{\overline X}\otimes L^{-1} \right) \oplus L^{-1}$.

\medskip
Summarizing, we have \cite[Appendix]{mmwyz}
$$
\Omega^0(E)_{(2)} ={\text{Ker}}N_1=\left( \Omega^1_{\overline X}\otimes L^{-1} \right) \oplus L^{-1}, \quad
\Omega^2(E)_{(2)} = \Omega^2_{\overline{X}}\otimes E,
$$
and
$$
E \otimes \Omega^1_{\overline X}(\log D)(-D) \subseteq \Omega^1(E)_{(2)} \subseteq E \otimes \Omega^1_{\overline X}.
$$

\subsection{The $L^2$--Higgs cohomology of $E_1$} \ \\

In this subsection we compute the $L^2$--cohomology of the uniformizing Higgs bundle $E_1$ on an arithmetic
ball quotient surface $\overline {X}$: First neglecting $L^2$--conditions, the complex
$$
(E_1^\bullet,\theta): E_1 {\buildrel \theta \over \to} E_1 \otimes \Omega^1_{\overline X}(\log D)
{\buildrel \theta \over \to} E_1 \otimes \Omega^2_{\overline X}(\log D)
$$
looks like:
\begin{tiny}
$$
\begin{matrix}
&&&& \left(\Omega^1_{\overline X}(\log D) \otimes L^{-1}\right) & \oplus & L^{-1} \cr
&&&& \downarrow \cong  && \downarrow   \cr
&&\left(\Omega^1_{\overline X}(\log D)^{\otimes 2} \otimes L^{-1} \right)
& \oplus & \left(L^{-1} \otimes \Omega^1_{\overline X}(\log D) \right) & & 0  \cr
&&\downarrow   &&&&  \cr
\left(\Omega^1_{\overline X}(\log D) \otimes L^{-1} \otimes \Omega^2_{\overline X}(\log D) \right) &  \oplus &
\left( L^{-1} \otimes \Omega^2_{\overline X}(\log D)\right). & & & &
\end{matrix}
$$
\end{tiny}
Therefore it is quasi--isomorphic to a complex
$$
L^{-1} {\buildrel 0 \over \longrightarrow} S^2 \Omega^1_{\overline X}(\log D) \otimes L^{-1} {\buildrel 0 \over \longrightarrow}
\Omega^1_{\overline X}(\log D) \otimes \Omega^2_{\overline X}(\log D) \otimes L^{-1}
$$
with trivial differentials. As $L$ is nef and big, we have
$$
H^0(L^{-1})=H^1(L^{-1})=0.
$$
Hence we get
$$
{\mathbb H}^1({\overline X},(E_1^\bullet,\theta))
\cong H^0({\overline X}, S^2 \Omega^1_{\overline X}(\log D) \otimes L^{-1})
$$
and ${\mathbb H}^2({\overline X},(E_1^\bullet,\theta))$ is equal to
\begin{small}
$$
H^0({\overline X},K_{\overline X} \otimes L)^\vee \oplus
H^0({\overline X},\Omega^1_{\overline X}(\log D) \otimes L^{2})
\oplus H^1({\overline X}, S^2 \Omega^1_{\overline X}(\log D) \otimes L^{-1}).
$$
\end{small}
If we now impose the $L^2$--conditions and use the complex
$\Omega^*_{(2)}(E_1)$ instead of $(E_1^\bullet,\theta)$, the resulting cohomology groups are subquotients
of the groups described above.

\begin{theorem} \label{vanishingtheorem} With the assumption on $X$ as above:
\begin{itemize}
\item $H^0({\overline X}, \Omega^1_{\overline X}(\log D) \otimes \Omega^1_{\overline X} \otimes L^{-1})=0$ implies $IH^1(X,\V_1)=0$.
\item $IH^1(X,\V_1)=0$ implies $H^0({\overline X}, S^2 \Omega^1_{\overline X}(\log D)\otimes{\mathcal O}_{\overline{X}}(-D) \otimes L^{-1})=0$.
\end{itemize}
If $\Gamma$ is sufficiently small, then $H^1_{L^2}(E_1)=IH^1(X,\V_1)$ can be non--zero.
\end{theorem}

\proof By the proof above,
$$
H^1_{L^2}({\overline X},E_1)=L^2-\text{sections in } H^0({\overline X}, S^2 \Omega^1_{\overline X}(\log D) \otimes L^{-1}).
$$
In addition, one certainly has $\Omega^1_{\overline X}(\log D)(-D) \otimes E_1 \subseteq \Omega^1(E_1)_{(2)}  \subseteq \Omega^1_{\overline X} \otimes E_1$.
Together with $E_1=\Omega^1_{\overline X}(\log D) \otimes L^{-1} \oplus L^{-1}$ this implies that
\begin{small}
$$
H^0({\overline X}, S^2 \Omega^1_{\overline X}(\log D)\otimes{\mathcal O}_{\overline{X}}(-D) \otimes L^{-1}) \subset H^1_{L^2}({\overline X},E_1)
\subset  H^0({\overline X}, \Omega^1_{\overline X}(\log D) \otimes \Omega^1_{\overline X} \otimes L^{-1})
$$
\end{small}
which gives the assertion.
The standard representation $\V_1$ can have non--vanishing cohomology if the arithmetic subgroup $\Gamma$ has large index in $SU(2,1)$.
This is a result of Kazdan, see \cite[Cor. 5.9 and Remark 5.10]{bw}.
\endproof

In \cite{mmwyz} such a vanishing result has been verified for one particular example of a Picard modular surface,
studied also by Hirzebruch and Holzapfel.

\subsection{The $L^2$--Higgs cohomology of $E_2$} \ \\

We compute the $L^2$--cohomology of $E_2$ in a similar way: First neglecting
$L^2$--conditions, the complex
$$
(E_2^\bullet,\theta): E_2 {\buildrel \theta \over \to} E_2 \otimes \Omega^1_{\overline X}(\log D)
{\buildrel \theta \over \to} E_2 \otimes \Omega^2_{\overline X}(\log D)
$$
looks like:
$$
\begin{matrix}
&&& L & \oplus \left(\Omega^1_{\overline X}(\log D) \otimes L^{-2}\right) \cr
&&& \downarrow  & \downarrow &  \cr
&\left(L \otimes \Omega^1_{\overline X}(\log D) \right) & \oplus & \left( \Omega^1_{\overline X}(\log D)^{\otimes 2} \otimes L^{-2}  \right) & 0 &  \cr
&\downarrow  \cong &&&&  \cr
L^4 & \oplus \left(\Omega^1_{\overline X}(\log D) \otimes L \right) & & & &.
\end{matrix}
$$
and the arrow $L \to \Omega^1_{\overline X}(\log D)^{\otimes 2} \otimes L^{-2}$ is injective with cokernel
$S^2 \Omega^1_{\overline X}(\log D) \otimes L^{-2}$.
Therefore $(E_2^\bullet,\theta)$ is quasi--isomorphic to a complex
$$
\Omega^1_{\overline X}(\log D) \otimes L^{-2} {\buildrel 0 \over \longrightarrow}
S^2 \Omega^1_{\overline X}(\log D) \otimes L^{-2} {\buildrel 0 \over \longrightarrow} L^{4}
$$
with trivial differentials. Hence we get
$$
{\mathbb H}^1({\overline X},(E_2^\bullet,\theta))
\cong H^0({\overline X}, S^2 \Omega^1_{\overline X}(\log D) \otimes L^{-2}) \oplus
H^1({\overline X}, \Omega^1_{\overline X}(\log D) \otimes L^{-2})
$$
and ${\mathbb H}^2({\overline X},(E_1^\bullet,\theta))$ is equal to
$$
H^0({\overline X}, L^4) \oplus H^1({\overline X}, S^2 \Omega^1_{\overline X}(\log D) \otimes L^{-2}) \oplus
H^2({\overline X},\Omega^1_{\overline X}(\log D) \otimes L^{2}).
$$
With $L^2$--conditions we have to again introduce twists by $-D$ as above.
The vanishing of $H^1_{L^2}(E_1)$ and $H^1_{L^2}(E_2)$ is related by
a conjugation argument:
\begin{theorem} Assume that $\overline{X}$ is an arithmetic ball quotient surface. Then
$H^1_{L^2}(E_1)=0$ if and only if $H^1_{L^2}(E_2)=0$. If both vanish, this implies that
$$
H^0({\overline X}, S^2 \Omega^1_{\overline X}(\log D)(-D) \otimes L^{-2})
= H^1({\overline X}, \Omega^1_{\overline X}(\log D)(-D) \otimes L^{-2})=0.
$$
\end{theorem}

\begin{proof} The first statement follows from complex conjugation, the second one was shown above.
\end{proof}

In \cite{mmwyz} there is an example with $H^1_{L^2}(E_1)=H^1_{L^2}(E_2)=0$.
In general this group does not vanish by the result of Kazdan \cite[Cor. 5.9 and Remark 5.10]{bw}.

\subsection{The $L^2$--Higgs cohomology of ${\rm End}^0(\V_1)={\rm End}^0(\V_2)$} \ \\

If we consider irreducible representations other than $S^n \V_1$ or $S^n \V_2$ then there is the following vanishing theorem
which is a special case of the results in \cite{ls,saper}. Recall that we assumed $\Gamma$ to be torsion-free.

\begin{theorem}[Ragunathan, Li--Schwermer, Saper]
Let $\W$ be an irreducible representation of an arithmetic subgroup $\Gamma \subset SU(2,1)$ coming from $G=SU(2,1)$,
i.e., a local system on $X$. If the highest weight of $\W$ is regular, then one has $IH^1(\overline{X},\W)=0$.
\end{theorem}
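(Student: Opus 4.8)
The statement is a vanishing of intersection cohomology in the lowest positive degree for regular coefficients, and the plan is to route it through the dictionary between intersection cohomology and automorphic forms, so that regularity of the highest weight becomes the mechanism forcing the vanishing. First I would replace $IH^1$ by $L^2$-cohomology: by Zucker's conjecture (proved by Looijenga and by Saper--Stern) one has $IH^*(\overline{X},\W)\cong H^*_{(2)}(X,\W)$ for the complete Poincar\'e metric, and then Borel's $L^2$ analogue of the Matsushima--Murakami formula decomposes $H^*_{(2)}(X,\W)\cong\bigoplus_\pi m(\pi)\,H^*(\mathfrak g,K;\pi\otimes E)$, the sum running over the discrete automorphic spectrum of $\Gamma\backslash\SU(2,1)$, where $E$ is the finite-dimensional representation underlying $\W$ and $K=S(U(2)\times U(1))$. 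It then suffices to show that no $\pi$ occurring discretely contributes to relative Lie algebra cohomology in degree one.

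By Wigner's lemma the $\pi$-summand vanishes unless $\pi$ has the infinitesimal character of $E^\vee$, which is regular precisely because the highest weight of $\W$ is. The Vogan--Zuckerman classification then says that every unitary $\pi$ with nonzero $(\mathfrak g,K)$-cohomology is a cohomologically induced module $A_{\mathfrak q}(\lambda)$ attached to a $\theta$-stable parabolic $\mathfrak q=\mathfrak l\oplus\mathfrak u$, and that its cohomology begins in degree $R_{\mathfrak q}=\dim_{\C}(\mathfrak u\cap\mathfrak p)$. I would now enumerate the $\theta$-stable parabolics of $\mathfrak{su}(2,1)$ relative to a compact Cartan subalgebra. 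The trivial representation is excluded since $E$ is nontrivial; the representations with compact Levi (the discrete series, including the holomorphic ones) have $R_{\mathfrak q}=2$, since any positive system contains exactly two noncompact positive roots, and so contribute only in the middle degree. The single remaining family, with $\mathfrak l$ noncompact of type $S(U(1,1)\times U(1))$, has $R_{\mathfrak q}=1$ and yields non-tempered modules $A_{\mathfrak q}(\lambda)$ carrying a genuine degree-one class.

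The crux, and the only hard point, is therefore to show that these non-tempered representations do not appear in the part of the discrete spectrum feeding intersection cohomology. This is exactly where regularity must be used a second time: although $A_{\mathfrak q}(\lambda)$ exists as an abstract unitary module for every regular $\lambda$, its occurrence as an automorphic representation is obstructed. I would obtain this either automorphically, from Rogawski's description of the discrete spectrum of $U(2,1)$ by base change to $\GL_3$ --- the residual and CAP representations that are non-tempered and cohomological have singular Langlands parameters and hence cannot carry the regular infinitesimal character of $E$, which is the Li--Schwermer mechanism --- or topologically, from Saper's result that such classes are concentrated on the boundary strata of the Baily--Borel compactification and are annihilated in the intersection complex. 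Either route removes the $R_{\mathfrak q}=1$ contributions and gives $IH^1(\overline{X},\W)=0$.

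I expect this last step to be the genuine obstacle: the first two steps are formal once Zucker's conjecture and Vogan--Zuckerman are granted, and the degree bookkeeping for $\SU(2,1)$ is elementary, but excluding the non-tempered cohomological representations requires the full force of the automorphic classification (or of Saper's $L^2$ boundary analysis), and this is precisely the content imported from \cite{ls,saper}. As a cross-check I would also keep in mind the parallel Raghunathan--Matsushima--Murakami argument, in which one represents classes by $L^2$-harmonic $1$-forms and applies a Weitzenb\"ock identity whose curvature endomorphism is made strictly positive on $1$-forms by regularity of the highest weight; its only delicate ingredient is the passage to the complete, non-compact $L^2$ setting, which is legitimized by the same identification $IH^*=H^*_{(2)}$.
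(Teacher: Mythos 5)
The paper itself offers no argument for this theorem --- it is imported as a citation from Li--Schwermer and Saper --- so your proposal has to stand on its own, and its skeleton (Zucker's conjecture, Borel's $L^2$-Matsushima decomposition over the discrete spectrum, Wigner's lemma, Vogan--Zuckerman) is indeed the standard route. But the decisive step contains a genuine error. You assert that the $\theta$-stable parabolic with noncompact Levi $\mathfrak{l}=\mathfrak{s}(\mathfrak{u}(1,1)\oplus\mathfrak{u}(1))$ ``yields non-tempered modules $A_{\mathfrak{q}}(\lambda)$ carrying a genuine degree-one class'' for regular $\lambda$, and that eliminating them is the hard, global part of the proof. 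This is false: in the Vogan--Zuckerman construction $\lambda$ must be (the differential of) a character of $L$, hence orthogonal to every root of $\mathfrak{l}$; since this Levi contains the root of its $\mathfrak{su}(1,1)$ factor, every admissible $\lambda$ is singular, so no such module exists for regular $\lambda$. Equivalently, by Wigner's lemma a nonzero $H^{*}(\mathfrak{g},K;A_{\mathfrak{q}}(\lambda)\otimes E)$ forces the highest weight of $E$ (up to duality) to equal $\lambda$, since both weights are dominant and their $\rho$-shifts must lie in one Weyl orbit. Hence a regular highest weight forces $\Delta(\mathfrak{l})=\emptyset$, i.e. $\mathfrak{q}$ is a Borel, $\pi_{\infty}$ is a discrete series, and its $(\mathfrak{g},K)$-cohomology sits only in the middle degree $2$. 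The vanishing of $IH^1$ is therefore a purely archimedean statement once the discrete-spectrum decomposition is in place; your ``crux'' is a non-problem created by a misstatement of the Vogan--Zuckerman pairing condition.

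Worse, the global mechanism you propose to resolve it would fail as stated. Non-tempered cohomological representations are \emph{not} obstructed from occurring in the discrete spectrum of $U(2,1)$: Rogawski's residual/CAP A-packets do occur and do contribute to $IH^1$ and $IH^3$ of Picard modular surfaces --- but only for coefficient systems with \emph{singular} highest weight (such as $S^n\V_1$, $S^n\V_2$, which is exactly consistent with the Kazdan non-vanishing quoted in the paper). What fails for regular $E$ is the infinitesimal-character match, which is the local fact above, not the automorphy of the representation; conflating ``singular Arthur parameter'' with ``cannot match a regular infinitesimal character'' is where your argument goes wrong. Two smaller points: you never actually prove the exclusion step, only cite it; and your identification $IH^{*}\cong H^{*}_{(2)}$ via Zucker's conjecture refers to the Baily--Borel compactification, whereas $\overline{X}$ in the paper is toroidal and the identification used there is the Higgs-theoretic one of Jost--Yang--Zuo --- harmless here, since both compute $L^2$-cohomology, but worth flagging.
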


For representations of $SU(2,1)$ regular highest weight is equivalent to not being isomorphic to either $S^n \V_1$ nor $S^n \V_2$.
This applies in particular to all representations $\W_{a,b}$ of $SU(2,1) \subseteq SL_3(\C)$ with $a,b>0$. These are defined as
kernels of natural maps
$$
S^a \V_1 \otimes  S^b \V_2 \longrightarrow S^{a-1} \V_1 \otimes S^{b-1} \V_2.
$$
We refer to \cite{fh} for this notation. A consequence of the vanishing theorem is:

\begin{cor}\label{vancor}
$IH^1(\overline{X},\W_{a,b})=0$ for $a,b >0$.
\end{cor}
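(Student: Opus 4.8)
The plan is to reduce the statement entirely to the cited vanishing theorem (Ragunathan--Li--Schwermer--Saper) by a purely representation-theoretic identification: I would show that $\W_{a,b}$ is an \emph{irreducible} $SU(2,1)$-representation whose highest weight is \emph{regular} exactly when $a,b>0$, and then quote the theorem verbatim.

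First I would recall the Weyl construction of $\W_{a,b}$ from Fulton--Harris \cite{fh}. Since $\V_1$ is the standard representation of $SU(2,1)\subseteq SL_3(\C)$ and $\V_2\cong\V_1^\vee$, the contraction (trace) map $S^a\V_1\otimes S^b\V_2\to S^{a-1}\V_1\otimes S^{b-1}\V_2$ is $SU(2,1)$-equivariant. By the standard argument its kernel is the Cartan component of the tensor product, i.e. the irreducible module with highest weight $a\omega_1+b\omega_2$, where $\omega_1,\omega_2$ are the fundamental weights of $SL_3$ ($\omega_1$ being the highest weight of $\V_1$ and $\omega_2$ that of $\V_2$). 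In particular $\W_{a,b}$ is irreducible, so the hypothesis of the cited theorem is met.

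Second, I would check regularity of this highest weight. For $SL_3$ the simple coroots $\alpha_1^\vee,\alpha_2^\vee$ pair with $\lambda=a\omega_1+b\omega_2$ to give $\langle\lambda,\alpha_1^\vee\rangle=a$ and $\langle\lambda,\alpha_2^\vee\rangle=b$. A dominant weight is regular precisely when all such pairings are nonzero, i.e. when $a>0$ and $b>0$; equivalently, the non-regular dominant weights are exactly $n\omega_1$ and $n\omega_2$, the highest weights of $S^n\V_1$ and $S^n\V_2$, which matches the remark preceding the corollary. Hence for $a,b>0$ the highest weight of $\W_{a,b}$ is regular, and applying the Ragunathan--Li--Schwermer--Saper vanishing theorem to the irreducible local system $\W_{a,b}$ yields $IH^1(\overline{X},\W_{a,b})=0$.

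The only genuine content is the identification of the kernel as the Cartan component; everything else is the root-datum bookkeeping above, and the deep analytic input is packaged in the cited theorem. I expect the main (though routine) obstacle to be confirming that the contraction map is surjective, so that $\W_{a,b}$ has exactly the dimension of the irreducible $a\omega_1+b\omega_2$ and is not a strictly larger submodule; I would settle this either by the weight count in \cite{fh} or by observing that the map is a nonzero equivariant map between modules whose decompositions into irreducibles are already known.
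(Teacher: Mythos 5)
Your proposal is correct and follows essentially the same route as the paper: the paper also deduces the corollary directly from the Ragunathan--Li--Schwermer--Saper theorem, noting that for $SU(2,1)$ regular highest weight means precisely not being $S^n\V_1$ or $S^n\V_2$, which holds for $\W_{a,b}$ with $a,b>0$. Your identification of $\W_{a,b}$ as the irreducible Cartan component with highest weight $a\omega_1+b\omega_2$ and the coroot pairing computation just make explicit the representation-theoretic bookkeeping that the paper delegates to the citation of Fulton--Harris.
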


The Higgs bundle corresponding to ${\rm End}(\V_1)$ is $E_1 \otimes E_2$. It is of weight
$2$ with Hodge types $(2,0)+(1,1)+(0,2)$. We have
$$
E_1 \otimes E_2 = \left( \Omega^1_{\overline X}(\log D) \otimes L^{-1} \oplus L^{-1} \right)
\otimes \left(L \oplus  \Omega^1_{\overline X}(\log D) \otimes L^{-2} \right)
$$
$$
=\Omega^1_{\overline X}(\log D) \oplus \left( \Omega^1_{\overline X}(\log D)^{\otimes 2} \otimes L^{-3} \oplus {\mathcal O}_{\overline{X}} \right)
\oplus \left(\Omega^1_{\overline X}(\log D) \otimes L^{-3} \right)
$$
corresponding to types. The Higgs bundle corresponding to $\W_{1,1}={\rm End}^0 \V_1$ is the quotient of this bundle modulo
the image of ${\mathcal O}_{\overline{X}}$ and hence is isomorphic to
$$
\Omega^1_{\overline X}(\log D) \oplus \left( \Omega^1_{\overline X}(\log D) \otimes \Omega^1_{\overline X}(\log D) \otimes L^{-3} \right)
\oplus \left(\Omega^1_{\overline X}(\log D) \otimes L^{-3} \right).
$$
Its Higgs complex looks like
$$
\begin{matrix}
\Omega^1_{\overline X}(\log D) \oplus \left( \Omega^1_{\overline X}(\log D) \otimes \Omega^1_{\overline X}(\log D) \otimes L^{-3} \right)
\oplus \left(\Omega^1_{\overline X}(\log D) \otimes L^{-3} \right)
\cr \downarrow  \cr
\Omega^1_{\overline X}(\log D)^{\otimes 2} \oplus
\left( \Omega^1_{\overline X}(\log D)  \otimes \Omega^1_{\overline X}(\log D) \otimes \Omega^1_{\overline X}(\log D) \otimes L^{-3}\right)
\oplus \left(\Omega^1_{\overline X}(\log D)^{\otimes 2} \otimes L^{-3} \right)
\cr \downarrow  \cr
\left( \Omega^1_{\overline X}(\log D)\otimes L^3  \right) \oplus  \left(\Omega^1_{\overline X}(\log D)\otimes \Omega^1_{\overline X}(\log D)  \right)
\oplus \left(\Omega^1_{\overline X}(\log D) \right)
\end{matrix}
$$
This complex is quasi--isomorphic to
$$
\Omega^1_{\overline X}(\log D) \otimes L^{-3}  {\buildrel 0 \over \longrightarrow}
S^3  \Omega^1_{\overline X}(\log D) \otimes L^{-3} {\buildrel 0 \over \longrightarrow}
\Omega^1_{\overline X}(\log D)\otimes L^3.
$$
Corollary~\ref{vancor} with $a=b=1$ now implies a rigidity theorem:

\begin{theorem}[Weil rigidity] Let $X$ be an arithmetic ball quotient surface with uniformizing VHS $\V_1$.  Then
$IH^1(\overline{X},{\rm End}^0 \V_1)=0$ and therefore
$$
H^0(\overline{X}, S^3  \Omega^1_{\overline X}(\log D)(-D) \otimes L^{-3})=0
$$
as well as
$$
H^1(\overline{X}, \Omega^1_{\overline X}(\log D) \otimes L^{-3})=H^1(\overline{X}, T_{\overline{X}}(-\log D))=0.
$$
\end{theorem}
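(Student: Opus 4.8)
The plan is to derive all three assertions from the single cohomological vanishing $IH^1(\overline{X},{\rm End}^0\V_1)=0$ and then transport that vanishing through the $L^2$--Higgs complex computed just above. First I would observe that the adjoint representation ${\rm End}^0\V_1$ is precisely $\W_{1,1}$, and that it has regular highest weight because $a=b=1>0$; Corollary~\ref{vancor} then yields $IH^1(\overline{X},{\rm End}^0\V_1)=0$, which is the first assertion and is nothing but the cohomological form of Weil rigidity. By the Jost--Yang--Zuo isomorphism this equals $H^1_{L^2}$ of the trace--free part of $E_1\otimes E_2$, so $H^1_{L^2}({\rm End}^0\V_1)=0$.

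Next I would feed in the quasi--isomorphism just established, namely that the ${\rm End}^0$ Higgs complex is quasi--isomorphic to
$$
\Omega^1_{\overline X}(\log D)\otimes L^{-3}\ {\buildrel 0\over\longrightarrow}\ S^3\Omega^1_{\overline X}(\log D)\otimes L^{-3}\ {\buildrel 0\over\longrightarrow}\ \Omega^1_{\overline X}(\log D)\otimes L^3
$$
with trivial differentials. Since the differentials vanish, ${\mathbb H}^1$ receives exactly two contributions: $H^0$ of the middle term and $H^1$ of the bottom term. Imposing the $L^2$--conditions and using the sandwich $E\otimes\Omega^1_{\overline X}(\log D)(-D)\subseteq\Omega^1(E)_{(2)}\subseteq E\otimes\Omega^1_{\overline X}$ exactly as in the proofs for $E_1$ and $E_2$, the middle contribution becomes the space of $L^2$--sections of $H^0(S^3\Omega^1_{\overline X}(\log D)\otimes L^{-3})$, which contains $H^0(S^3\Omega^1_{\overline X}(\log D)(-D)\otimes L^{-3})$; vanishing of $H^1_{L^2}$ then forces $H^0(S^3\Omega^1_{\overline X}(\log D)(-D)\otimes L^{-3})=0$. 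For the bottom contribution I would note that $\Omega^1_{\overline X}(\log D)\otimes L^{-3}$ is the lowest Hodge piece and is annihilated by $\theta$, hence sits inside ${\rm Ker}({\rm Res}(\theta))=\Omega^0(E)_{(2)}$ without a normal--direction twist (just as $L^{-1}$ did for $E_1$), so its sheaf of $L^2$--sections is the full sheaf and vanishing of $H^1_{L^2}$ gives $H^1(\overline{X},\Omega^1_{\overline X}(\log D)\otimes L^{-3})=0$. Finally I would identify this sheaf with $T_{\overline X}(-\log D)$ via $L^3=\det\Omega^1_{\overline X}(\log D)$ together with the rank--two identity $V\otimes(\det V)^{-1}\cong V^\vee$, producing the geometric form $H^1(\overline{X},T_{\overline X}(-\log D))=0$ of rigidity.

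The hard part will be the $L^2$--bookkeeping in the previous step, that is, deciding precisely which graded piece of ${\mathbb H}^1$ acquires the twist by $(-D)$. For the middle ($H^0$) term this is routine: the $L^2$--condition selects the $(-D)$--twisted subsheaf, exactly as for $E_1$ and $E_2$. The delicate point is the bottom ($H^1$) term, where I must verify that the lowest Hodge piece is carried into $\Omega^0(E)_{(2)}$ with its full sheaf of sections, so that the contribution is the \emph{untwisted} $H^1(T_{\overline X}(-\log D))$ and one obtains honest Weil rigidity rather than only a $(-D)$--twisted variant. This rests on the explicit description of ${\rm Ker}(N_1)$ in the appendix of \cite{mmwyz} and on the unipotence of the monodromy, which makes the residue eigenvalue zero and hence imposes no growth condition on the bottom of the Hodge filtration.
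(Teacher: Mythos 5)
Your proposal is correct and follows essentially the same route as the paper: regular highest weight of $\W_{1,1}={\rm End}^0\V_1$ plus Corollary~\ref{vancor}, the Jost--Yang--Zuo identification, and the quasi--isomorphism of the ${\rm End}^0$ Higgs complex with the three--term complex with zero differentials. In fact you spell out the $L^2$--bookkeeping (the $(-D)$--twist on the $H^0$ of the middle term versus the untwisted $H^1$ of the lowest Hodge piece lying in ${\rm Ker}(N_1)$, and the rank--two duality $\Omega^1_{\overline X}(\log D)\otimes L^{-3}\cong T_{\overline X}(-\log D)$) that the paper leaves implicit, and this bookkeeping is exactly right.
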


\subsection{The $L^2$--Higgs complex of $S^2 E_1$} \ \\

Let us look at the symmetric square $S^2 \V_1$. The associated Higgs bundle is $S^2E_1$.
The Higgs complex without $L^2$--conditions looks as follows:
\begin{small}
$$
\begin{matrix}
\left(S^2 \Omega^1_{\overline X}(\log D) \otimes L^{-2} \right) \oplus \left( \Omega^1_{\overline X}(\log D) \otimes L^{-2} \right)
\oplus L^{-2} \cr \downarrow  \cr
\left( S^2 \Omega^1_{\overline X}(\log D) \otimes L^{-2} \otimes \Omega^1_{\overline X}(\log D) \right)
\oplus \left( \Omega^1_{\overline X}(\log D) \otimes L^{-2} \otimes \Omega^1_{\overline X}(\log D) \right)  \oplus
\left(L^{-2}  \otimes \Omega^1_{\overline X}(\log D) \right) \cr \downarrow  \cr
\left( S^2 \Omega^1_{\overline X}(\log D) \otimes L^{-2} \otimes \Omega^2_{\overline X}(\log D) \right)
\oplus \left( \Omega^1_{\overline X}(\log D) \otimes L^{-2} \otimes \Omega^2_{\overline X}(\log D) \right)  \oplus
\left(L^{-2}  \otimes \Omega^2_{\overline X}(\log D) \right)
\end{matrix}
$$
\end{small}
Again, many differentials in this complex are isomorphisms or zero. For example the differential
$$
S^2 \Omega^1_{\overline X}(\log D) \otimes L^{-2} \otimes \Omega^1_{\overline X}(\log D)
\to \Omega^1_{\overline X}(\log D) \otimes L^{-2} \otimes \Omega^2_{\overline X}(\log D)
$$
is a projection map onto a direct summand, since for every vector space $W$ we have the
identity
$$
S^2 W \otimes W = S^3 W \oplus \left( W \otimes \Lambda^2 W \right).
$$
Therefore the Higgs complex for $S^2(E_1)$ is quasi--isomorphic to
$$
L^{-2} {\buildrel 0 \over \to}  S^3 \Omega^1_{\overline X}(\log D) \otimes L^{-2}
{\buildrel 0 \over \to} S^2 \Omega^1_{\overline X}(\log D) \otimes L.
$$
We conclude that the first cohomology is given by a subspace
$$
H^1_{L^2}(S^2 E_1) \subseteq H^0({\overline X},S^3 \Omega^1_{\overline X}(\log D) \otimes L^{-2}).
$$

\subsection{The $L^2$--Higgs complex of $S^2 E_2$} \ \\

Now look at the symmetric square $S^2 \V_2$. The associated Higgs bundle is $S^2E_2$.
The Higgs complex ignoring $L^2$--conditions looks as follows:
$$
\begin{matrix}
L^2 \oplus \left( \Omega^1_{\overline X}(\log D) \otimes L^{-1} \right) \oplus \left( S^2 \Omega^1_{\overline X}(\log D) \otimes L^{-4} \right)
\cr \downarrow  \cr
\left( L^{\otimes 2} \otimes  \Omega^1_{\overline X}(\log D) \right) \oplus \left( \Omega^1_{\overline X}(\log D)^{\otimes 2} \otimes L^{-1} \right)
\oplus \left( S^2 \Omega^1_{\overline X}(\log D) \otimes L^{-4} \otimes \Omega^1_{\overline X}(\log D) \right)
\cr \downarrow  \cr
L^5 \oplus \left( \Omega^1_{\overline X}(\log D) \otimes L^2 \right) \oplus
\left( S^2 \Omega^1_{\overline X}(\log D) \otimes L^{-1} \right)
\end{matrix}
$$
Again, many differentials in this complex are isomorphisms, exact  or zero.
For example the sequence
$$
L^2 \to \Omega^1_{\overline X}(\log D)^{\otimes 2} \otimes L^{-1} \to S^2 \Omega^1_{\overline X}(\log D) \otimes L^{-1}
$$
is exact and $\Omega^1_{\overline X}(\log D) \otimes L^2 $ is mapped isomorphically. By the plethysm
$$
S^2 W \otimes W = S^3 W \oplus \left( W \otimes \Lambda^2 W \right).
$$
we get that the Higgs complex is quasi--isomorphic to
$$
S^2 \Omega^1_{\overline X}(\log D) \otimes L^{-4} {\buildrel 0 \over \to} S^3 \Omega^1_{\overline X}(\log D) \otimes L^{-4}
{\buildrel 0 \over \to} L^5.
$$

We will later see that in this complex one has
$$
H^0({\overline X},S^3 \Omega^1_{\overline X}(\log D)(-D) \otimes L^{-4})=0.
$$
The proof will be given in section~\ref{vansection}.

\subsection{The $L^2$--Higgs complex for $\Lambda^3 \V$} \ \\

The Higgs bundle corresponding to the third primitive cohomology inside
$\Lambda^3 \V$ will be denoted by $E^3_{\rm pr}=\bigoplus E_{\rm
pr}^{p,q}$.
We have in particular two important graded pieces: \\

(A) $E_{\rm pr}^{2,1} \to  E_{\rm pr}^{1,2} \otimes
\Omega_{\overline{X}}^1(\log D) \to E_{\rm pr}^{0,3} \otimes L^3$, and \\
(B) $E_{\rm pr}^{3,0} \to  E_{\rm pr}^{2,1} \otimes
\Omega_{\overline{X}}^1(\log D) \to E_{\rm pr}^{1,2} \otimes L^3$. \\
\ \\
Let us first compute all $E_{\rm pr}^{p,q}$. We have $E_{\rm
pr}^{3,0}=L^2$ and $E_{\rm pr}^{0,3}=L^{-2}$.
Furthermore
$$
E^{2,1}= \Lambda^2 E^{1,0} \otimes E^{0,1}=
{\mathcal O}_{\overline{X}} \oplus 2 \left( \Omega_{\overline{X}}^1(\log D) \otimes L^{-1}\right) \oplus
\left( \Omega_{\overline{X}}^1(\log D)^{\otimes 2} \otimes L^{-2} \right).
$$
Hence we get the primitive part
$$
E_{\rm pr}^{2,1} = {\mathcal O}_{\overline{X}} \oplus \left(
\Omega_{\overline{X}}^1(\log D) \otimes L^{-1}\right) \oplus \left(S^2 \Omega_{\overline{X}}^1(\log D) \otimes L^{-2}\right),
$$
since the Lefschetz operator $\wedge \omega$ is a natural inclusion here. In a similar way we get
$$
E_{\rm pr}^{1,2}= {\mathcal O}_{\overline{X}} \oplus
\left(\Omega_{\overline{X}}^1(\log D) \otimes L^{-2}\right) \oplus \left(S^2 \Omega_{\overline{X}}^1(\log D) \otimes L^{-4}\right).
$$
Now the complex (A) becomes
$$
\begin{matrix}
{\mathcal O}_{\overline{X}} \oplus \left( \Omega_{\overline X}^1(\log D) \otimes L^{-1}\right)  \oplus \left( S^2 \Omega_{\overline{X}}^1(\log D) \otimes L^{-2}\right)
\cr \downarrow  \cr
\Omega^1_{\overline X}(\log D) \oplus \left( \Omega^1_{\overline X}(\log D)^{\otimes 2} \otimes L^{-2} \right) \oplus
\left( S^2 \Omega_{\overline{X}}^1(\log D) \otimes \Omega^1_{\overline X}(\log D) \otimes L^{-4} \right)
\cr \downarrow  \cr
L.
\end{matrix}
$$
It is quasi--isomorphic to
$$
(A): \quad {\mathcal O} _{\overline X} {\buildrel 0 \over \to}
\left( S^3 \Omega^1_{\overline X}(\log D) \otimes L^{-4}\right) \oplus \Omega_{\overline X}^1(\log D)  \to 0
$$
in degrees $0$ and $1$ only. In a similar way (B) becomes
$$
\begin{matrix}
L^2 \cr \downarrow  \cr
\Omega^1_{\overline X}(\log D) \oplus \left( \Omega^1_{\overline X}(\log D)^{\otimes 2} \otimes L^{-1} \right) \oplus
\left( S^2 \Omega_{\overline{X}}^1(\log D) \otimes \Omega^1_{\overline X}(\log D) \otimes L^{-2} \right)
\cr \downarrow  \cr
L^3 \oplus \left( \Omega_{\overline X}^1(\log D) \otimes L^{1}\right)  \oplus \left( S^2 \Omega_{\overline X}^1(\log D) \otimes L^{-1}\right)
\end{matrix}
$$
It is quasi--isomorphic to
$$
(B): \quad  \left( S^3 \Omega^1_{\overline X}(\log D) \otimes L^{-2} \right) \oplus \Omega_{\overline X}^1(\log D)  {\buildrel 0 \over \to} L^3
$$
in degrees $1$ and $2$ only.

\subsection{A vanishing theorem} \label{vansection} \ \\

 For higher symmetric powers on arithmetic
ball quotient surfaces we get the following vanishing theorem:

\begin{theorem}\label{vantheorem}
One has $H^0(\overline{X} ,S^n \Omega_{\overline{X}}^1(\log D)(-D) \otimes L^{-m})=0$ for all $m \ge n \ge 3$.
\end{theorem}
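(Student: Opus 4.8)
The plan is to realize $H^0(\overline{X}, S^n\Omega^1_{\overline X}(\log D)(-D)\otimes L^{-m})$ as the degree-one piece of an $L^2$-Higgs complex and then to annihilate it with Corollary~\ref{vancor}. First I would extend the explicit computations already carried out for $E_1$, $E_2$, $S^2E_1$, $S^2E_2$ and $\W_{1,1}$ to the general Schur functor $\W_{a,b}$. In each worked case the Higgs complex becomes, after cancelling the isomorphism- and zero-pieces of $\theta$, a complex with trivial differentials whose degree-one term is a single twisted symmetric power, and imposing the $L^2$-conditions replaces that term by its $(-D)$-twist, exactly as in the passage to $\Omega^*_{(2)}$ in the earlier sections. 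Comparing the five examples fixes the dictionary
$$
(\text{degree-one term of the }L^2\text{-complex of }\W_{a,b})=S^{\,a+b+1}\Omega^1_{\overline X}(\log D)(-D)\otimes L^{-(a+2b)} ,
$$
so that $S^n\Omega^1_{\overline X}(\log D)(-D)\otimes L^{-m}$ occurs precisely when $n=a+b+1$ and $m=a+2b$. Confirming that the degree-one cohomology is genuinely a single such term for all $a,b$ (and not a sum) is part of this first step.

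Next I would invert the relation: given $(n,m)$ set $b=m-n+1$ and $a=2n-m-2$. Both are positive exactly for $n\le m\le 2n-3$, and nonemptiness of this range forces $n\ge 3$, which explains the hypothesis of the theorem. For such $(a,b)$ the highest weight of $\W_{a,b}$ is regular, so Corollary~\ref{vancor} gives $IH^1(\overline X,\W_{a,b})=H^1_{L^2}(\overline X,(\W_{a,b},\theta))=0$. Since the reduced complex has zero differentials, its first hypercohomology is $H^0$ of the degree-one term plus $H^1$ of the degree-zero term, and vanishing of the total yields
$$
H^0(\overline X,\,S^n\Omega^1_{\overline X}(\log D)(-D)\otimes L^{-m})=0\qquad(n\le m\le 2n-3).
$$
This is the heart of the statement. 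As the remark after the theorem anticipates, the $(-D)$ can be dropped here: via Poincar\'e duality $IH^3(\overline X,\W_{a,b})=IH^1(\overline X,\W_{b,a})^\vee=0$, and feeding this through Serre duality removes the boundary twist, giving the slightly stronger vanishing.

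The remaining range $m\ge 2n-2$ admits no regular $\W_{a,b}$, and here I would fall back on the Higgs-stability mechanism behind Miyaoka's compact vanishing~\cite{miyaoka}. The uniformizing bundle $(E_1,\theta)$ has $\det E_1=\mathcal O_{\overline X}$, hence is a stable Higgs bundle of Higgs-slope zero, so $S^nE_1\otimes L^{\,n-m}$ is Higgs-semistable of slope $(n-m)L^2\le 0$. A nonzero section of $S^n\Omega^1_{\overline X}(\log D)\otimes L^{-m}=S^n(E_1^{1,0})\otimes L^{\,n-m}$ sits in the top Hodge piece and, because $\theta$ is the maximal (isomorphism) Higgs field on the uniformizing bundle, generates a Higgs subsheaf whose slope exceeds that of the ambient semistable bundle — a contradiction. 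For genuinely large $m$ this is already implied by the ordinary $L$-semistability of $\Omega^1_{\overline X}(\log D)$: since $L\cdot D=0$ by adjunction on the elliptic boundary curves, the twisted bundle has slope $(\tfrac{3n}{2}-m)L^2$, which is negative once $m>\tfrac{3n}{2}$.

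The hard part will be the narrow seam between the two methods. The representation-theoretic interval $[n,2n-3]$ and the slope interval $m>\tfrac{3n}{2}$ together cover everything once $n$ is not too small, but for $n=3$ the borderline case $(n,m)=(3,4)$ survives: there the twisted bundle has strictly positive slope $\tfrac12 L^2$, so no numerical bound applies, and no regular Schur functor reproduces it. This is exactly the log-analogue of the positive-slope situations that Miyaoka handles in the compact case, and closing it will require the full strength of the maximality of the Higgs field rather than a slope estimate, carried out on $\overline X$ while tracking the $(-D)$ twist and the $L^2$-growth conditions along the cusp divisors. I expect this borderline case, together with the careful bookkeeping of the boundary twist in the Higgs-stability estimate, to be the technically delicate point of the proof.
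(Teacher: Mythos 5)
Your first two steps are, almost verbatim, the paper's own proof. The paper considers $\W_{a,b}$ with $a,b>0$, notes that its Higgs bundle $E_{a,b}\subset S^aE_1\otimes S^bE_2$ contains the summand $S^{a+b}\Omega^1_{\overline X}(\log D)\otimes L^{-a-2b}$, observes that in degree one of the Higgs complex the term $S^{a+b+1}\Omega^1_{\overline X}(\log D)\otimes L^{-(a+2b)}$ is $\theta$-closed and not hit by $\theta$ from degree zero, and kills its $(-D)$-twisted ($L^2$) sections by Corollary~\ref{vancor}, with exactly your dictionary $n=a+b+1$, $m=a+2b$. (The paper does not verify, and does not need, your stronger claim that the reduced degree-one term is a \emph{single} symmetric power; it suffices that this one summand survives, and in general $\mathbb{H}^1$ has several contributions.)

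The important point is your second observation, and there you have identified a genuine gap -- but it is a gap in the paper's proof itself, not only in your attempt. Since $a,b\ge 1$, the parametrization reaches exactly the pairs $n\le m\le 2n-3$, while the theorem claims all $m\ge n\ge 3$; the paper's closing sentence simply ignores this. The discrepancy matters: the unreachable case $(n,m)=(3,4)$ is precisely the vanishing promised at the end of section 7 and the one consumed in the proof of the Ceresa-cycle theorem. Your proposed repair by Miyaoka-type Higgs stability is not carried out, and by your own admission it does not close $(3,4)$; note also that $(4,6)$ escapes both of your ranges $[n,2n-3]$ and $m>\tfrac{3n}{2}$. A repair is possible inside the paper's own framework, without any stability input. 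For $m\ge 2n$ write $m=a+2n$ with $a\ge 0$ and use $\W_{a,n}$: the lowest Hodge piece of its Higgs bundle is $S^{n}\Omega^1_{\overline X}(\log D)\otimes L^{-m}$, which is annihilated by $\theta$, so its $(-D)$-twisted sections inject into $\mathbb{H}^0_{L^2}=IH^0(X,\W_{a,n})=(\W_{a,n})^{\Gamma}$, and this vanishes for any nontrivial irreducible representation because the lattice $\Gamma$ is Zariski dense -- no regularity of the highest weight is needed here. For the two remaining values $m=2n-2$ and $m=2n-1$, cube a putative nonzero section $s$: then $s^{3}$ is a nonzero section of $S^{3n}\Omega^1_{\overline X}(\log D)(-3D)\otimes L^{-3m}$ (the symmetric algebra of a bundle on the integral variety $\overline X$ has no zero divisors), and $(3n,3m)$ satisfies $3n\le 3m\le 2(3n)-3$, so it lies in the range already covered by the regular representations; hence $s^{3}=0$, a contradiction. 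Combining these with the paper's argument yields the full range $m\ge n\ge 3$, so the theorem is true, although neither the paper's proof nor your proposal establishes it as stated.
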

\proof Consider $\W_{a,b}$ for $a,b>0$. The corresponding Higgs bundle $E_{a,b}$ is a subbundle of $S^a E_1 \otimes S^b E_2$.
Since $E_1= \left( \Omega^1_{\overline{X}}(\log D) \otimes L^{-1} \right) \oplus L^{-1}$ and
$E_2= L \oplus \left( \Omega^1_{\overline{X}}(\log D) \otimes L^{-2} \right)$, $S^a E_1 \otimes S^b E_2$ contains the direct summand
$S^a \Omega^1_{\overline{X}}(\log D) \otimes  L^{-a} \otimes S^b \Omega^1_{\overline{X}}(\log D) \otimes L^{-2b}$.
We have defined
$$
E_{a,b}:={\rm Ker} (S^a E_1 \otimes S^b E_2 {\buildrel \iota \over \to} S^{a-1} E_1 \otimes S^{b-1} E_2),
$$
with the map $\iota$ induced by the pairing $E_1 \otimes E_2 \to {\mathcal O}_{\overline X}$.
When one restricts the pairing $E_1 \otimes E_2 \to {\mathcal O}_{\overline X}$ to $E_1^{1,0} \otimes E_2^{0,1}$
it is given by the wedge product
$$
\Omega^1_{\overline{X}}(\log D) \otimes L^{-1} \otimes \Omega^1_{\overline{X}}(\log D) \otimes L^{-2} \to \Lambda^2 \Omega^1_{\overline{X}}(\log D) \otimes L^{-3}={\mathcal O}_{\overline X}.
$$
Therefore, $E_{a,b}$ contains the vector bundle $S^{a+b} \Omega^1_{\overline{X}}(\log D) \otimes L^{-a-2b}$ as a direct summand.

Now we compute $H^1_{L^2}$ of the corresponding Higgs complex for $E_{a,b}$ in the same way as in the proof of theorem~\ref{vanishingtheorem}.
Ignoring again $L^2$--conditions first, then in degree one of the corresponding Higgs complex there is the vector bundle
$S^{a+b+1} \Omega^1_{\overline{X}}(\log D) \otimes  L^{-a-2b}$, which is in the kernel of $\theta$ but 
not killed by the differential $\theta$ from degree zero. This follows in the same way as in the proof of theorem~\ref{vanishingtheorem}.
Therefore the $H^0$ of this term survives in $H^1_{L^2}(\overline{X} ,E_{a,b})$.
For $a,b>0$ we however have $H^1_{L^2}(\overline{X} ,E_{a,b})=0$ by Corollary~\ref{vancor} and hence we have
$H^0(\overline{X} ,S^{a+b+1} \Omega_{\overline{X}}^1(\log D) \otimes {\mathcal O}_{\overline{X}}(-D) \otimes L^{-a-2b})=0$.
For all such choices of $a,b$ we let $n=a+b+1 \ge 3$ and $m=a+2b$ and we obtain the assertion for all possible values of $m \ge n \ge 3$ in this way.
\qed

\section{Higher dimensional ball quotients: Non--vanishing theorems}
\label{highervan}

If $\overline{X} \setminus D =\B_n =SU(n,1)/U(n)$ is an $n$--dimensional arithmetic ball quotient with smooth boundary $D$,
then the uniformizing Higgs bundle is as above $E_1=\left( \Omega_{\overline{X}}^1(\log D) \otimes L^{-1}\right) \oplus L^{-1}$
with $L^{n+1}={\mathcal O}_{\overline{X}}(K_{\overline{X}}+D)$. Its dual Higgs bundle is
$E_2=L \oplus \left( \Omega_{\overline{X}}^{n-1}(\log D) \otimes L^{-n} \right)$. The latter holds because of the perfect pairing
$\Omega_{\overline{X}}^1(\log D) \otimes \Omega_{\overline{X}}^{n-1}(\log D) \to  \Omega_{\overline{X}}^{n}(\log D)=L^{n+1}$.
The Higgs operator for $E_1$ is given by the identity map
$$
\theta: \Omega_{\overline{X}}^1(\log D) \otimes L^{-1} \to L^{-1} \otimes \Omega_{\overline{X}}^1(\log D)
$$
on $\Omega_{\overline{X}}^1(\log D) \otimes L^{-1}$ and by $0$ on $L^{-1}$.
Therefore we can compute its $L^2$--cohomology as above and obtain:
\begin{theorem}
$H^i_{L^2}(\overline{X},(E_1,\theta))$ is isomorphic to the $L^2$--sections inside
$$
H^0_{L^2}\left({\rm Ker}(\Omega_{\overline{X}}^1(\log D) \otimes L^{-1} \otimes
\Omega^i_{\overline{X}}(\log D) \to \Omega^{i+1}_{\overline{X}}(\log D) \otimes L^{-1})\right)
$$
for $i=1,\ldots,n$ and $H^0_{L^2}(\overline{X},(E_1,\theta)) \subseteq H^0(\overline{X},L^{-1})=0$.
\end{theorem}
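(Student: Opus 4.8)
The plan is to compute the full Higgs complex of $E_1$ first, establish that it is formal (quasi-isomorphic to its cohomology sheaves with zero differentials), and only afterwards impose the $L^2$-conditions. Write $E_1 = A \oplus B$ with $A = \Omega^1_{\overline{X}}(\log D) \otimes L^{-1}$ and $B = L^{-1}$. Since $\theta$ is the identity on $A$ and zero on $B$, the differential $\wedge\theta$ on $E_1 \otimes \Omega^i_{\overline{X}}(\log D)$ carries the summand $A \otimes \Omega^i_{\overline{X}}(\log D)$ into $B \otimes \Omega^{i+1}_{\overline{X}}(\log D)$ by the wedge product $\Omega^1_{\overline{X}}(\log D) \otimes \Omega^i_{\overline{X}}(\log D) \to \Omega^{i+1}_{\overline{X}}(\log D)$, and annihilates $B \otimes \Omega^i_{\overline{X}}(\log D)$. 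First I would record that the differential is therefore strictly off-diagonal: it connects only the $A$-summand in degree $i$ to the $B$-summand in degree $i+1$, and nothing else.

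This makes the complex a literal direct sum of the one-term complex $B = L^{-1}$ in degree $0$, the two-term complexes $[A \otimes \Omega^i_{\overline{X}}(\log D) \to B \otimes \Omega^{i+1}_{\overline{X}}(\log D)]$ in degrees $(i,i+1)$ for $0 \le i \le n-1$, and the one-term complex $A \otimes \Omega^n_{\overline{X}}(\log D)$ in degree $n$. Because the wedge map is surjective for $i \ge 1$ and an isomorphism for $i=0$, each two-term summand is quasi-isomorphic to its kernel placed in degree $i$, and the $i=0$ piece is acyclic. Hence the complex is formal, with cohomology sheaves $\mathcal{H}^0 = L^{-1}$ and, for $1 \le i \le n$,
\[
\mathcal{H}^i = \Ker\left(\Omega^1_{\overline{X}}(\log D) \otimes L^{-1} \otimes \Omega^i_{\overline{X}}(\log D) \to \Omega^{i+1}_{\overline{X}}(\log D) \otimes L^{-1}\right).
\]
This is precisely the $k=1$ instance of the symmetric-power theorem proved above, so at this point I would simply invoke that decomposition rather than redo it.

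Next I would impose the $L^2$-conditions. Using the sandwich $E_1 \otimes \Omega^\bullet_{\overline{X}}(\log D)(-D) \subseteq \Omega^\bullet_{(2)}(E_1) \subseteq E_1 \otimes \Omega^\bullet_{\overline{X}}(\log D)$ exactly as in the surface case, the same direct-sum decomposition goes through after intersection with $\Omega^\bullet_{(2)}(E_1)$, so the $L^2$-cohomology sheaves are the $L^2$-sections of the $\mathcal{H}^i$, the complex remains formal, and the hypercohomology reads $H^i_{L^2}(E_1) = \bigoplus_q H^{i-q}_{L^2}(\overline{X}, \mathcal{H}^q_{(2)})$. For $i=0$ the only term is $H^0_{L^2}(L^{-1}) \subseteq H^0(\overline{X}, L^{-1}) = 0$, since $L$ is nef and big; this gives the degree-zero statement. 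For $i \ge 1$ the terms with $q > i$ sit in negative cohomological degree and so vanish, and I want to isolate the single surviving term $q=i$, namely $H^0_{L^2}(\mathcal{H}^i)$, which is the asserted space of $L^2$-sections.

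The hard part will be killing the remaining contributions $H^{i-q}_{L^2}(\overline{X}, \mathcal{H}^q_{(2)})$ with $q < i$; that is, showing $H^j_{L^2}(\overline{X}, \mathcal{H}^q_{(2)}) = 0$ for every $j > 0$. Each $\mathcal{H}^q$ is a subsheaf of $\Omega^1_{\overline{X}}(\log D) \otimes \Omega^q_{\overline{X}}(\log D) \otimes L^{-1}$, carrying a negative twist by $L^{-1}$ with $L$ nef and big, so I would appeal to an $L^2$-vanishing theorem of Kawamata--Viehweg/Nadel type in the complete Poincar\'e metric on $X$ (the very type of statement this paper establishes) to conclude that the higher $L^2$-cohomology of these cohomology sheaves vanishes. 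Once that input is available, only the $q=i$ term in degree zero persists, yielding $H^i_{L^2}(E_1) \cong H^0_{L^2}(\overline{X}, \mathcal{H}^i)$, and the $i=0$ computation above completes the proof.
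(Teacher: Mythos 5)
Your first two paragraphs reproduce the paper's proof exactly: the paper's entire argument is the observation that on each level $\theta$ acts as the canonical surjection $\Omega^1_{\overline{X}}(\log D)\otimes L^{-1}\otimes\Omega^i_{\overline{X}}(\log D)\to\Omega^{i+1}_{\overline{X}}(\log D)\otimes L^{-1}$ on the first summand and as zero on $L^{-1}\otimes\Omega^i_{\overline{X}}(\log D)$, whence the complex splits into two-term pieces, each quasi-isomorphic to its kernel, compatibly with the $L^2$-subcomplex. Up to that point you are on the same route as the paper, and that part is correct.

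The gap is your final paragraph, and it is not merely a missing reference: the vanishing you ask for, $H^j(\overline{X},\mathcal{H}^q_{(2)})=0$ for all $j>0$, is false in general, so no Kawamata--Viehweg/Nadel-type input can close it. For $q=0$ the sheaf is exactly $L^{-1}$ (the $L^2$-condition does not alter the $L^{-1}$-summand in degree $0$), and Kawamata--Viehweg gives $H^j(\overline{X},L^{-1})=0$ only for $j<n$; in the top degree $H^n(\overline{X},L^{-1})\cong H^0(\overline{X},K_{\overline{X}}\otimes L)^\vee$, which is non-zero whenever $K_{\overline{X}}\otimes L$ has sections. The other kernels are not negative at all: they carry a single $L^{-1}$ against several positive form factors, e.g. $\mathcal{H}^n\cong\Omega^1_{\overline{X}}(\log D)\otimes L^{n}$ is positively twisted. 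In fact the paper's own surface computation ($n=2$, section on the Higgs cohomology of $E_1$) records precisely the cross terms you are trying to kill:
$$
{\mathbb H}^2({\overline X},(E_1^\bullet,\theta))=H^0({\overline X},K_{\overline X}\otimes L)^\vee\oplus H^0({\overline X},\Omega^1_{\overline X}(\log D)\otimes L^{2})\oplus H^1({\overline X},S^2\Omega^1_{\overline X}(\log D)\otimes L^{-1}),
$$
and never claims the first and third summands vanish. Accordingly the paper does not attempt your step: what its proof establishes is the formality statement (made explicit in the next theorem, on $S^kE_1$, which is phrased as a quasi-isomorphism with trivial differentials), so that in degree $i$ the kernel sheaf contributes the summand $H^0$ of its $L^2$-sections --- equivalently this computes ${\rm Gr}_F^{i+1}H^i_{L^2}$ --- while the identification of the \emph{full} group $H^i_{L^2}$ with that $H^0$ is only available when the cross terms $H^{i-q}(\mathcal{H}^q_{(2)})$, $q<i$, happen to vanish. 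That happens for $i\le 1$, via $H^1(\overline{X},L^{-1})=0$ for $L$ nef and big, which is exactly the case the subsequent corollary uses; your attempt to force it for all $i\le n$ cannot succeed.
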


\begin{proof}
The Higgs operator on each level is given by the canonical surjective map
$$
\Omega_{\overline{X}}^1(\log D) \otimes L^{-1} \otimes
\Omega^i_{\overline{X}}(\log D) \to \Omega^{i+1}_{\overline{X}}(\log D) \otimes L^{-1}
$$
on $ \Omega_{\overline{X}}^1(\log D) \otimes L^{-1} \otimes
\Omega^i_{\overline{X}}(\log D) $ and by $0$ on $L^{-1} \otimes
\Omega^i_{\overline{X}}(\log D)$. This proves the assertion.
\end{proof}


Turning to more general symmetric powers of $E_1=\left( \Omega_{\overline{X}}^1(\log D) \otimes L^{-1} \right) \oplus L^{-1}$,
we have the following result:
\begin{theorem} The $L^2$--Higgs complex for the symmetric power $(S^kE_1,\theta)$ with $k \ge 1$ is quasi--isomorphic to
$$
0 \to  T^0(k) {\buildrel 0 \over \to}  T^1(k) {\buildrel 0 \over \to} \to \cdots {\buildrel 0 \over \to}  T^n(k) \to 0,
$$
with trivial differentials, where $T^i(k)$ is the sheaf of $L^2$--sections of
$$
{\rm Ker}\left( S^k \Omega_{\overline{X}}^1(\log D) \otimes L^{-k} \otimes
\Omega^i_{\overline{X}}(\log D) \to S^{k-1} \Omega_{\overline{X}}^1(\log D) \otimes L^{-k}
\otimes \Omega^{i+1}_{\overline{X}}(\log D) \right)
$$
for $i=1,\ldots,n$. For $i=0$ we get $T_0(k)=L^{-k}$.\\
\end{theorem}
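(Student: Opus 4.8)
The plan is to organize the Higgs complex for $S^kE_1$ into a direct sum of (truncated) Koszul complexes indexed by a single weight, just as the surface computations in the earlier sections were organized by the plethysm $S^2W\otimes W=S^3W\oplus(W\otimes\Lambda^2 W)$. Throughout write $\Omega^i:=\Omega^i_{\overline X}(\log D)$. Since $E_1=(\Omega^1\otimes L^{-1})\oplus L^{-1}$ with $\theta$ the identity on the $(1,0)$--part and $0$ on $L^{-1}$, we have $S^kE_1=\bigoplus_{j=0}^k S^j\Omega^1\otimes L^{-k}$, the $j$-th summand being the Hodge piece of type $(j,k-j)$. The term of the Higgs complex in form--degree $i$ is $\bigoplus_{j=0}^k S^j\Omega^1\otimes L^{-k}\otimes\Omega^i$, and (by the Leibniz rule on symmetric powers, followed by $\wedge$) the differential $\wedge\theta$ sends the $(j,i)$--summand to the $(j-1,i+1)$--summand by the contraction--wedge map
$$S^j\Omega^1\otimes\Omega^i\ra S^{j-1}\Omega^1\otimes\Omega^{i+1},\qquad \eta_1\cdots\eta_j\otimes\omega\mapsto\sum_l \eta_1\cdots\widehat{\eta_l}\cdots\eta_j\otimes(\eta_l\wedge\omega).$$

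First I would observe that this differential preserves the weight $s:=i+j$, so the whole Higgs complex splits as a direct sum over $0\le s\le k+n$ of subcomplexes $K_s^\bullet$, where $K_s^\bullet$ has $S^{s-i}\Omega^1\otimes L^{-k}\otimes\Omega^i$ in form--degree $i$. Up to the uniform twist by $L^{-k}$, each $K_s^\bullet$ is a truncation of the Koszul complex
$$0\ra S^s\Omega^1\ra S^{s-1}\Omega^1\otimes\Omega^1\ra\cdots\ra S^{s-i}\Omega^1\otimes\Omega^i\ra\cdots$$
of the identity endomorphism of $\Omega^1$, which is exact in positive weight $s\ge1$ and is $\mathcal O_{\overline X}$ in degree $0$ when $s=0$. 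The only truncation is the constraint $j\le k$ coming from $S^kE_1$, i.e.\ we keep the terms with $i\ge s-k$.

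Next I would read off the cohomology weight by weight. For $s=0$ the subcomplex is $L^{-k}$ in degree $0$, giving $T^0(k)=L^{-k}$. For $1\le s\le k$ the constraint $j\le k$ is vacuous, so $K_s^\bullet$ is the full Koszul complex and is acyclic. For $k<s\le k+n$ we keep only $i\ge s-k\ge1$; since the full Koszul complex is exact, a stupid truncation removing the front terms (those with $j>k$) leaves a complex whose cohomology is concentrated at the first surviving spot $i=s-k$, where it equals the kernel of the outgoing differential, and vanishes elsewhere. Setting $i=s-k$, this kernel is exactly
$$\Ker\bigl(S^k\Omega^1\otimes L^{-k}\otimes\Omega^{i}\ra S^{k-1}\Omega^1\otimes L^{-k}\otimes\Omega^{i+1}\bigr),$$
that is $T^i(k)$ for $i=1,\dots,n$. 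Thus each $K_s^\bullet$ is cohomologically concentrated in a single degree, hence quasi--isomorphic to that cohomology sheaf placed in that degree; summing, the total complex is quasi--isomorphic to $\bigoplus_i T^i(k)[-i]$, i.e.\ to $T^0(k)\to\cdots\to T^n(k)$ with zero differentials.

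Finally I would impose the $L^2$--conditions. The $L^2$--Higgs subcomplex $\Omega^\bullet_{(2)}(S^kE_1)\subseteq S^kE_1\otimes\Omega^\bullet$ is cut out by the pointwise growth conditions of \cite{jyz}, whose smooth--boundary shape is recalled from \cite[Appendix]{mmwyz}; these conditions respect the Hodge bigrading and hence the weight decomposition above. The contraction--wedge maps are $\mathcal O_{\overline X}$--linear and compatible with this filtration, so the weight--$s$ splitting and the Koszul acyclicity descend to the $L^2$--subcomplex, and the surviving cohomology sheaves become the $L^2$--sections of the kernels, i.e.\ the $T^i(k)$ as defined in the statement. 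The main obstacle is precisely this last step: one must verify that the purely algebraic quasi--isomorphism is compatible with the $L^2$--growth conditions near $D$ --- equivalently, that intersecting each acyclic Koszul piece with $\Omega^\bullet_{(2)}$ keeps it acyclic and that the surviving kernel sheaf, intersected with $\Omega^\bullet_{(2)}$, is exactly the claimed sheaf of $L^2$--sections. This is the higher--rank, higher--dimensional analogue of the $(-D)$--twists that appeared throughout the surface computations, and I expect it to be handled by the same local analysis of the Poincar\'e metric.
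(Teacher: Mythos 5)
Your proposal is correct and takes essentially the same approach as the paper: the paper likewise writes $S^kE_1$ as $\bigoplus_{j=0}^{k} S^j\Omega^1_{\overline{X}}(\log D)\otimes L^{-k}$ and invokes exactness in the middle of the resulting Eagon--Northcott/Koszul-type complexes (citing Green) to conclude that cohomology survives only at the ends, giving the kernels $T^i(k)$. Your weight decomposition into truncated Koszul subcomplexes is merely a more explicit organization of that same argument, and your flagging of the $L^2$-compatibility step is if anything more careful than the paper, whose proof passes over it in silence.
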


\begin{proof} The Higgs bundle associated to $S^k E_1$ is
$$
\left(S^k \Omega_{\overline{X}}^1(\log D) \otimes L^{-k}\right) \oplus
\left(S^{k-1}  \Omega_{\overline{X}}^1(\log D) \otimes L^{-k}\right) \oplus \cdots \oplus L^{-k}.
$$
If we write down the Higgs complex, then the Eagon--Northcott type complexes
$$
S^k \Omega_{\overline{X}}^1(\log D) \otimes L^{-k} \otimes
\Omega^i_{\overline{X}}(\log D) \to S^{k-1} \Omega_{\overline{X}}^1(\log D) \otimes L^{-k}
\otimes \Omega^{i+1}_{\overline{X}}(\log D) \to
$$
$$
\to S^{k-2} \Omega_{\overline{X}}^1(\log D) \otimes L^{-k}
\otimes \Omega^{i+2}_{\overline{X}}(\log D)
$$
occur which are exact in the middle \cite{green}. Hence the only cohomology arises at the left or right ends as stated.
\end{proof}

\begin{ex}
In the case $n=3$ this complex is
\begin{small}
$$
0 \to L^{-k} \to S^{k+1} \Omega_{\overline{X}}^1(\log D) \otimes L^{-k} \to
\Gamma_{k,1}(\Omega_{\overline{X}}^1(\log D)) \otimes L^{-k} \to
S^k \Omega_{\overline{X}}^1(\log D) \otimes L^{4-k} \to 0.
$$
\end{small}
Here $\Gamma_{a,b}$ is the standard irreducible representation
$$
\Gamma_{a,b}(W)={\rm Ker}\left(S^a(W) \otimes S^b(\Lambda^2 W)  \to S^{a-1}(W) \otimes S^{b-1}(\Lambda^2 W) \otimes \det(W)\right).
$$
associated to any $GL_3$--representation $W$.
\end{ex}

In a similar way we obtain a result for $E_2$:

\begin{theorem} \label{SKV2} The $L^2$--Higgs complex for the symmetric power $(S^kE_2,\theta)$ with $k \ge 1$ is quasi--isomorphic to
$$
0 \to  T^0(k) {\buildrel 0 \over \to}  T^1(k) {\buildrel 0 \over \to} \to \cdots {\buildrel 0 \over \to}  T^n(k) \to 0,
$$
with trivial differentials, where $T^i(k)$ is the sheaf of $L^2$--sections of
$$
{\rm Coker}\left( S^{k-1} \Omega_{\overline{X}}^{n-1}(\log D) \otimes L^{(n+1)-nk} \otimes
\Omega^{i-1}_{\overline{X}}(\log D) \to S^{k} \Omega_{\overline{X}}^{n-1}(\log D) \otimes L^{-nk}
\otimes \Omega^{i}_{\overline{X}}(\log D) \right)
$$
for $i=0,\ldots,n-1$. For $i=n$ we get $T^n(k)=L^{k+n+1}$.
\end{theorem}

\begin{proof} Argue as in the case of $E_1$.
\end{proof}

\begin{ex}
In the case $n=3$ this complex is
\begin{small}
$$
0 \to S^{k} \Omega_{\overline{X}}^{2}(\log D) \otimes L^{-3k} \to
\Gamma_{1,k}(\Omega_{\overline{X}}^1(\log D)) \otimes L^{-3k} \to
S^{k+1} \Omega_{\overline{X}}^2(\log D) \otimes L^{-3k} \to L^{k+4} \to 0.
$$
\end{small}
\end{ex}

\begin{cor} For $i=1$ and $k \ge 1$,
$H^1_{L^2}(\overline{X},(S^k E_1,\theta))$ is equal to the $L^2$--sections
of $H^0(\overline{X},S^{k+1} \Omega_{\overline{X}}^1(\log D) \otimes L^{-k})$.
This group is non--zero if $\Gamma$ is sufficiently small. In a similar way
$H^1_{L^2}(\overline{X},(S^k E_2,\theta))$ is equal to the $L^2$--sections
of $H^0(\Gamma_{1,\ldots,k}(\Omega_{\overline{X}}^1(\log D)) \otimes L^{-nk})$.
This group is non--zero if $\Gamma$ is sufficiently small.
\end{cor}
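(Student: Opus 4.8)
The plan is to read both assertions of the corollary directly off the two preceding theorems, which already replace the $L^2$--Higgs complexes of $(S^kE_1,\theta)$ and $(S^kE_2,\theta)$ by complexes
$$
0 \to T^0(k) {\buildrel 0 \over \to} T^1(k) {\buildrel 0 \over \to} \cdots {\buildrel 0 \over \to} T^n(k) \to 0
$$
with trivial differentials. A complex with zero differentials is a direct sum $\bigoplus_p T^p(k)[-p]$, so its hypercohomology splits as $H^m_{L^2} = \bigoplus_p H^{m-p}(\overline{X}, T^p(k))$. In degree $m=1$ only two summands survive, namely
$$
H^1_{L^2}(\overline{X},(S^kE_j,\theta)) = H^1(\overline{X}, T^0(k)) \oplus H^0(\overline{X}, T^1(k)), \qquad j=1,2.
$$
The corollary then amounts to checking that the first summand vanishes and identifying the second.

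For $E_1$ the previous theorem gives $T^0(k)=L^{-k}$. Since $L$ is nef and big on the $n$--dimensional $\overline X$ with $n\ge 2$, Kawamata--Viehweg vanishing applied to $K_{\overline X}+kL$ together with Serre duality yields $H^j(\overline X, L^{-k})=0$ for $0\le j\le n-1$, so in particular $H^1(\overline X,T^0(k))=0$. To identify $T^1(k)$, note that it is the sheaf of $L^2$--sections of the kernel of
$$
S^k\Omega^1_{\overline X}(\log D)\otimes L^{-k}\otimes\Omega^1_{\overline X}(\log D) \to S^{k-1}\Omega^1_{\overline X}(\log D)\otimes L^{-k}\otimes\Omega^2_{\overline X}(\log D),
$$
and the plethysm $\Ker(S^kW\otimes W\to S^{k-1}W\otimes\Lambda^2W)=S^{k+1}W$ (the Eagon--Northcott exactness already invoked in the proof of the theorem) shows this kernel to be $S^{k+1}\Omega^1_{\overline X}(\log D)\otimes L^{-k}$. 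Hence $H^1_{L^2}(\overline X,(S^kE_1,\theta))$ is the space of $L^2$--sections of $H^0(\overline X, S^{k+1}\Omega^1_{\overline X}(\log D)\otimes L^{-k})$, as claimed.

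For $E_2$ the companion theorem identifies $T^1(k)$ as the sheaf of $L^2$--sections of the cokernel defining $\Gamma_{1,\ldots,k}(\Omega^1_{\overline X}(\log D))\otimes L^{-nk}$ (this is exactly the plethysm recorded in the $n=3$ example), so the surviving summand $H^0(\overline X,T^1(k))$ is the asserted group. The step that needs care is the vanishing of the other summand $H^1(\overline X,T^0(k))$, where $T^0(k)$ is now the $L^2$--subsheaf of $S^k\Omega^{n-1}_{\overline X}(\log D)\otimes L^{-nk}\cong S^kT_{\overline X}(-\log D)\otimes L^{k}$; because the tangent directions are not positive this does not follow from Kawamata--Viehweg, and I expect it to be the main obstacle. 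The cleanest route around it is the conjugation symmetry $\V_2=\overline{\V_1}$: via the Jost--Yang--Zuo isomorphism \cite{jyz} one has $IH^1(X,S^k\V_2)=\overline{IH^1(X,S^k\V_1)}$, and since the $E_1$--computation exhibits the latter as pure of a single Hodge type $(p_0,q_0)$, its conjugate is pure of type $(q_0,p_0)$. In the Higgs description for $E_2$, whose Hodge numbers are complementary, that single type is captured precisely by $H^0(\overline X,T^1(k))$, forcing the remaining contribution $H^1(\overline X,T^0(k))$, which would sit in a different Hodge type, to vanish.

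Finally, the non--vanishing for small $\Gamma$ is a representation--theoretic input rather than a geometric one. Using again $H^1_{L^2}(\overline X,(S^kE_j,\theta))=IH^1(X,S^k\V_j)$, the representations $S^k\V_1$ and $S^k\V_2=\overline{S^k\V_1}$ have non--regular highest weight, so the Ragunathan--Li--Schwermer--Saper vanishing theorem does not apply; on the contrary, Kazhdan's theorem, exactly as used for the standard representation in \cite[Cor.~5.9 and Remark~5.10]{bw}, guarantees that $IH^1(X,S^k\V_1)$, and hence by conjugation $IH^1(X,S^k\V_2)$, is non--zero once $\Gamma$ has sufficiently large index, i.e.\ lies deep enough in the congruence tower. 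This yields both non--vanishing assertions.
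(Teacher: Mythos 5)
Your treatment of the $E_1$ half and of the non--vanishing clause is correct and matches the paper: the paper's own proof consists solely of the citation of Kazdan \cite[p.~255]{bw} (generalized in \cite{li}) for non--vanishing, leaving the identifications to the two preceding theorems, and your filling-in for $E_1$ (splitting of hypercohomology for a complex with zero differentials, Kawamata--Viehweg plus Serre duality to kill $H^1(\overline{X},L^{-k})$, and the Eagon--Northcott plethysm identifying $T^1(k)$ with the $L^2$--subsheaf of $S^{k+1}\Omega^1_{\overline{X}}(\log D)\otimes L^{-k}$) is exactly the intended argument.

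The $E_2$ half, however, contains a genuine error: the Hodge--type bookkeeping in your conjugation argument is inverted. A subquotient of $(S^kE_2)^{p,q}\otimes\Omega^i_{\overline{X}}(\log D)$ sitting in degree $i$ of the Higgs complex contributes to $Gr_F^{p+i}$ of the cohomology. For $E_2$ both $T^0(k)$ and $T^1(k)$ are carried by the \emph{lowest} graded piece $(S^kE_2)^{0,k}=S^k\Omega^{n-1}_{\overline{X}}(\log D)\otimes L^{-nk}$ (as a subsheaf, resp.\ as a quotient of its twist by $\Omega^1$), so
$$
H^1(\overline{X},T^0(k))\ \text{has type}\ (0,k+1),
\qquad
H^0(\overline{X},T^1(k))\ \text{has type}\ (1,k).
$$
On the $E_1$ side your own computation shows $H^1_{L^2}(S^kE_1)=H^0(T^1(k))$ is pure of type $(k+1,0)$, since there $T^1(k)$ sits inside $(S^kE_1)^{k,0}\otimes\Omega^1$. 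Its conjugate is therefore pure of type $(0,k+1)$, which in the $E_2$--complex is captured by $H^1(T^0(k))$, \emph{not} by $H^0(T^1(k))$ as you assert. Carried out with the correct types, your conjugation argument proves the opposite of what you want: it forces the $L^2$--sections of $H^0(\Gamma_{1,\ldots,k}(\Omega^1_{\overline{X}}(\log D))\otimes L^{-nk})$ to vanish and identifies $H^1_{L^2}(S^kE_2)$ with $H^1$ of the $L^2$--subsheaf of $S^k\Omega^{n-1}_{\overline{X}}(\log D)\otimes L^{-nk}$. No repair of this step is possible, because purity of $IH^1$ together with conjugation makes the printed identification for $E_2$ incompatible with the non--vanishing clause; what your analysis really uncovers is that the two candidate pieces are interchanged in the statement (the same switch is already visible for $k=1$, $n=2$: $IH^1(X,\V_2)$, being conjugate to the pure $(2,0)$ group $IH^1(X,\V_1)$, must be of type $(0,2)$ and hence lives in $H^1$ of $\Omega^1_{\overline{X}}(\log D)\otimes L^{-2}$, not in $H^0$ of $S^2\Omega^1_{\overline{X}}(\log D)\otimes L^{-2}$).
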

\begin{proof} A result of Kazdan \cite[page 255]{bw}, generalized in \cite{li}, implies that
$H^1_{L^2}(\overline{X},(S^k E_1,\theta))$ is non--zero if $\Gamma$ is sufficiently small.
\end{proof}

\section{The mixed Hodge structures on the
cohomology groups of certain local systems} \label{MHS}

In this section, we will discuss the mixed Hodge
structure on the cohomology groups of some local systems
underlying a polarized variation of Hodge structures over an (arithmetic) ball quotient surface.
Before that, we provide a short outline of the Deligne-Saito-Zucker theory of mixed
Hodge structures on cohomology groups with locally constant coefficients.

\subsection{General remarks about the mixed Hodge structure on the cohomology group of a local system} \ \\

Let us first recall some notation from the introduction and then 
introduce basic facts about mixed Hodge structures on the cohomology group of a local system.
Denote by $X$ a quasi-projective manifold of dimension $d$ and by
$(\W_{\R},\nabla,F^\cdot)$ a polarized $\R$-VHS over $X$ of weight $n$. Let
$\overline X$ be a smooth, projective compactification of $X$ such that
$D:=\overline X \setminus X$ is a simple normal crossing divisor. For simplicity
of exposition, we assume that the local monodromy around each
irreducible component of $D$ is unipotent (it is quasi-unipotent
in general in geometric situations). Put ${\mathcal W}=\W_{\R}\otimes_{\R}\sO_{X_{an}}$, where
$\sO_{X_{an}}$ is the sheaf of germs of holomorphic functions on
$X$. Deligne's canonical extension (see Ch II, \S4 in \cite{De0})
gives a unique extended vector bundle $\overline {\mathcal W}$ of ${\mathcal W}$
over $\overline X$, together with a flat logarithmic connection
$$
\overline \nabla: \overline{\mathcal W}\to \overline{\mathcal W}\otimes \Omega^1_{\overline{X}}(\log D).
$$
Using this we obtain the logarithmic de Rham complex $\Omega^*_{\log}(\overline{\mathcal W},\overline \nabla)$. Schmid's
Nilpotent orbit theorem implies that the Hodge filtration $F^\cdot$ extends to a filtration $\overline F^\cdot$ of holomorphic
subbundles of $\overline {\mathcal W}$ as well (see \S4 in \cite{Sch}). By
GAGA, the extended holomorphic objects over $\overline X$ are in fact
algebraic. One defines a Hodge filtration on the logarithmic
de Rham complex by
$$
F^r\Omega^*_{\log}(\overline{\mathcal W},\overline \nabla)=\Omega^*_{\overline X}(\log S)\otimes \overline F^{r-*},
$$
which is a subcomplex by Griffiths transversality. After Saito (see \cite{Sa1}-\cite{Sa4}), there is a naturally defined weight
filtration $W_\cdot$ on the logarithmic de Rham complex such that the triple $(\Omega^*_{\log}(\overline{\mathcal W},\overline
\nabla),W_\cdot,F^\cdot)$ is a cohomological mixed Hodge complex (see Appendix A in \cite{El1} and \cite{El2}). By Scholie 8.1.9 (ii) in \cite{De1}, this gives
rise to a real MHS with weights $\geq k+n$ on $H^k(X,\W_{\R})$. When $\W_\R$ is constant, this MHS coincides with the one defined
in \S3.2, \cite{De1} by Deligne. It is this MHS that we intend to understand properly in the case of a ball quotient surface.

\subsection{Case 1: $S^{k}\V_{1}\oplus S^{k}\V_2$ over ball quotient surfaces} \ \\

Let $X=\Gamma \backslash \B_2$ be an arithmetic ball quotient surface. Let $\V_1$ and $\V_2$ be the two standard complex local systems of rank $3$
over $X$. Their sum $\W_\R=\V_1 \oplus \V_2$ is a real local system.
As in the preceeding subsection there are two associated Deligne extensions
$\overline{\sV}_i$ on $\overline X$ (the toroidal compactification) such that $\overline{\mathcal W}=\overline{\sV}_1 \oplus \overline{\sV}_2$.
We denote the logarithmic de Rham complex for each $\overline{\sV}_i$ by $(\Omega^{\cdot}_{\overline{X}}(\log D) \otimes \overline{\sV}_i, \nabla)$.\\
In the following we explain the weight filtration on the logarithmic Higgs
complex of $S^{k}E_{1} \oplus S^{k}E_{2}$ which is the
logarithmic Higgs bundle obtained from $S^{k} \overline{\mathcal V}_{1} \oplus S^{k} \overline{\mathcal V}_{2}$
by taking graded pieces with respect to the Hodge filtration.
This Higgs bundle corresponds to the real local system $S^k \V_1 \oplus S^k \V_2$.
For the construction of the weight filtration on the logarithmic de Rham complex, one could
see \cite{Zuc2} in the case of noncompact curves and \cite{El1,El2} in
the general case. The weight filtration on the logarithmic Higgs
complex is then obtained by taking the graded pieces for the Hodge filtration on the
weight filtration of the logarithmic de Rham complex. In our approach we follow El Zein's work \cite{El1,El2}. In this reference all
technical details concerning the filtrations and the related spectral sequences are discussed. \\
Now we introduce some useful notation:

\begin{defi}
Let $I={i_1,...,i_j}, 0 \leq j\leq k$ be a subset of
$\{1,...,k\}$. Denote $\Omega^1_{\overline{X}}(\log D)^{\otimes
k}(I)$ the subsheaves of $\Omega^1_{\overline{X}}(\log D)^{\otimes
k}$ such that on the $i$th position ($i \in I$), the tensor factor
$\Omega^1_{\overline{X}}(\log D)$ is replaced by
$\Omega^1_{\overline{X}}$. For example,
$\Omega^1_{\overline{X}}(\log D)^{\otimes 3}(\{2,3\})=
\Omega^1_{\overline{X}}(\log D) \otimes \Omega^1_{\overline{X}}
\otimes \Omega^1_{\overline{X}}.$ Denote $
\Omega^1_{\overline{X}}(\log D)^{\otimes k}_j $ the subsheaves of
$\Omega^1_{\overline{X}}(\log D)^{\otimes k}$ given by
$$ \Sigma_{I,|I|=j}\Omega^1_{\overline{X}}(\log D)^{\otimes k}(I).$$
Denote $ S^{k} \Omega^1_{\overline{X}}(\log D)_{j} := S^{k}
\Omega^1_{\overline{X}}(\log D) \cap  \Omega^1_{\overline{X}}(\log
D)^{\otimes k}_j.$
\end{defi}

Using this notation, one can show:

\begin{prop}\label{weightcomplex1}
(a) The Higgs complex for $S^k E_1$ has the following shape:
The weight $(k+1)$--part $W_{k+1}(S^{k}E_{1}, \theta )$ is of the form
\begin{equation*}
\begin{CD}
Gr_{F}^{k+2}: & & Gr_F^{k+1}: & & Gr_F^{j} (0 \le j \le k): & & &
& 
\\ \hline
\\
&&&&...&& & &  
\\
&&&&\downarrow&&&& 
  \\
&& S^{k}\Omega^1_{\overline{X}}(\log D) \otimes L^{-k} \otimes
\Omega^1_{\overline{X}}(\log D) &&
... & &  & & 
\\
&& \downarrow  && \downarrow &&&& 
\\
S^{k}\Omega^1_{\overline{X}}(\log D) \otimes L^{-k} \otimes
\Omega^2_{\overline{X}}(\log D) & \quad &
S^{k-1}\Omega^1_{\overline{X}}(\log D) \otimes L^{-k} \otimes
\Omega^2_{\overline{X}}(\log D) & \quad & ... & &  && 
 \\ \\  \hline
\end{CD}
\end{equation*} \ \\
where the column under $ Gr_F^{j} (0 \le j \le k)$ is given by
$$
S^j \Omega_{\overline{X}}^1(\log D) \otimes L^{-k} 
\to S^{j-1} \Omega_{\overline{X}}^1(\log D) \otimes L^{-k} \otimes
\Omega^{1}_{\overline{X}}(\log D)
$$
$$
\to S^{j-2} \Omega_{\overline{X}}^1(\log D) \otimes L^{-k} \otimes
\Omega^{2}_{\overline{X}}(\log D).
$$
(As a notation, $S^{j} \Omega_{\overline{X}}^1(\log D)$ means $0$
if $j<0$) \\
 For every $1\leq l \leq k$ the weight $l$--part
$W_{l}(S^{k}E_{1}, \theta )$   is of the form
\begin{equation*}
\begin{CD}
Gr_{F}^{k+2}: &  & Gr_F^{k+1} : & & Gr_F^{j} (0 \le j \le k): & &
& & 
 \\
 \hline
 \\
&&&& ...&&  &&  
\\
&&&& \downarrow && && 
\\
&& S^{k}\Omega^1_{\overline{X}}(\log D) \otimes L^{-k} \otimes
\Omega^1_{\overline{X}} + &&... &  &  && 
\\
&&  S^{k}\Omega^1_{\overline{X}}(\log D)_{k-l+1} \otimes L^{-k}
\otimes \Omega^1_{\overline{X}}(\log D)
\\ && \downarrow&&\downarrow  &&&& 
\\
S^{k}\Omega^1_{\overline{X}}(\log D) \otimes L^{-k} \otimes
\Omega^2_{\overline{X}} +& & S^{k-1}\Omega^1_{\overline{X}}(\log
D) \otimes L^{-k} \otimes
\Omega^2_{\overline{X}}(\log D) &&...& &  && 
\\
 S^{k}\Omega^1_{\overline{X}}(\log D)_{k-l+1} \otimes L^{-k}
\otimes \Omega^2_{\overline{X}}(\log D) \\ \\ \hline
\end{CD}
\end{equation*} \ \\

where the column under $ Gr_F^{j} (0 \le j \le k)$ is given by
$$
S^j \Omega_{\overline{X}}^1(\log D) \otimes L^{-k} 
\to S^{j-1} \Omega_{\overline{X}}^1(\log D) \otimes L^{-k} \otimes
\Omega^{1}_{\overline{X}}(\log D)
$$
$$
\to S^{j-2} \Omega_{\overline{X}}^1(\log D) \otimes L^{-k} \otimes
\Omega^{2}_{\overline{X}}(\log D).
$$

The weight $0$--part $W_{0}(S^{k}E_{1}, \theta )$ is of the form

\begin{equation*}
\begin{CD}
Gr_{F}^{k+2}: &  &Gr_{F}^{k+1}:&& && Gr_F^{j} (0 \le j \le k): & &
&
& 
\\
\hline
\\
&&&&   &&  ...
\\
&&&&&& \downarrow
\\
&& S^{k}\Omega^1_{\overline{X}}(\log D) \otimes L^{-k} \otimes
\Omega^1_{\overline{X}} &&  &&...\\
&& \downarrow  &&&&  \downarrow   \\
S^{k}\Omega^1_{\overline{X}}(\log D) \otimes L^{-k} \otimes
\Omega^2_{\overline{X}} & \quad &
S^{k-1}\Omega^1_{\overline{X}}(\log D) \otimes L^{-k} \otimes
\Omega^2_{\overline{X}}(\log D) &&  && ... \\  \\ \hline
\end{CD}
\end{equation*} \ \\

where the column under $ Gr_F^{j} (0 \le j \le k)$ is given by
$$
S^j \Omega_{\overline{X}}^1(\log D) \otimes L^{-k} 
\to S^{j-1} \Omega_{\overline{X}}^1(\log D) \otimes L^{-k} \otimes
\Omega^{1}_{\overline{X}}(\log D)
$$
$$
\to S^{j-2} \Omega_{\overline{X}}^1(\log D) \otimes L^{-k} \otimes
\Omega^{2}_{\overline{X}}(\log D).
$$

(b) The Higgs complex for $S^k E_2$ has the following shape: The
weight $(k+1)$--part $W_{k+1}(S^{k}E_{2}, \theta )$ is of the form
\begin{equation*}
\begin{CD}
Gr_F^{j} (2 \le j \le k+2): &  &  &  & Gr_F^{1} : & & Gr_{F}^{0}:
\\ \hline
\\
...&& &  &  S^{k-1}\Omega^1_{\overline{X}}(\log D) \otimes L^{3-2k}    & &  S^{k}\Omega^1_{\overline{X}}(\log D) \otimes L^{-2k}  \\
\downarrow &&&& \downarrow  && \\
...&&   && S^{k}\Omega^1_{\overline{X}}(\log D) \otimes L^{-2k}
\otimes \Omega^1_{\overline{X}}(\log D)
\\ \downarrow   &&&& \\ 
...&& \\
\\
\hline
\end{CD}
\end{equation*} \ \\

where the column under $ Gr_F^{j} (2 \le j \le k+2)$ is given by
$$
 S^{k-j} \Omega_{\overline{X}}^{1}(\log D)
\otimes L^{3j-2k} 
\to S^{k-j+1} \Omega_{\overline{X}}^{1}(\log D) \otimes
L^{3j-3-2k} \otimes \Omega^{1}_{\overline{X}}(\log D)
$$
$$
\to S^{k-j+2} \Omega_{\overline{X}}^{1}(\log D) \otimes
L^{3j-6-2k} \otimes \Omega^{2}_{\overline{X}}(\log D).
$$

For every $1\leq l \leq k$ the weight $l$--part $W_{l}(S^{k}E_{2},
\theta )$   is of the form
\begin{equation*}
\begin{CD}
Gr_F^{j} (2 \le j \le k+2): &  &  &  & Gr_F^{1} : & & Gr_{F}^{0}:
\\ \hline
\\
...&&&  &  S^{k-1}\Omega^1_{\overline{X}}(\log D) \otimes L^{3-2k}    && S^{k}\Omega^1_{\overline{X}}(\log D) \otimes L^{-2k}  \\
\downarrow &&&& \downarrow  &&  \\
...&&  & & S^{k}\Omega^1_{\overline{X}}(\log D) \otimes
L^{-2k} \otimes \Omega^1_{\overline{X}}(\log D) \\
\downarrow  &&&&&&   \\
... \\
\\
\hline
\end{CD}
\end{equation*} \ \\

where the column under $ Gr_F^{j} (2 \le j \le l)$ is given by
$$
 S^{k-j} \Omega_{\overline{X}}^{1}(\log D)
\otimes L^{3j-2k} 
\to S^{k-j+1} \Omega_{\overline{X}}^{1}(\log D) \otimes
L^{3j-3-2k} \otimes \Omega^{1}_{\overline{X}}(\log D)
$$
$$
\to S^{k-j+2} \Omega_{\overline{X}}^{1}(\log D) \otimes
L^{3j-6-2k} \otimes \Omega^{2}_{\overline{X}}(\log D),
$$
the column under $ Gr_F^{l+1} $ is given by
$$
 S^{k-l-1} \Omega_{\overline{X}}^{1}(\log D)
\otimes L^{3l+3-2k} 
\to S^{k-l}\Omega^1_{\overline{X}}(\log D) \otimes L^{3l-2k}
\otimes \Omega^1_{\overline{X}}
$$
$$
+ S^{k-l}\Omega^1_{\overline{X}}(\log D)_{1} \otimes L^{3l-2k}
\otimes \Omega^1_{\overline{X}}(\log D) \to S^{k-l+1}
\Omega_{\overline{X}}^{1}(\log D) \otimes L^{3l-3-2k} \otimes
\Omega^{2}_{\overline{X}}(\log D),
$$
and the column under $Gr_F^{j} (l+2 \le j \le k+2)$ is given by:
$$
 S^{k-j} \Omega_{\overline{X}}^{1}(\log D)
\otimes L^{3j-2k} 
\to S^{k-j+1}\Omega^1_{\overline{X}}(\log D) \otimes
L^{3j-3-2k}\otimes \Omega^1_{\overline{X}}
$$
$$
+ S^{k-j+1}\Omega^1_{\overline{X}}(\log D)_{1} \otimes L^{3j-3-2k}
\otimes \Omega^1_{\overline{X}}(\log D) \to S^{k-j+2}
\Omega_{\overline{X}}^{1}(\log D) \otimes L^{3j-6-2k} \otimes
\Omega^{2}_{\overline{X}}.
$$

The weight $0$--part $W_{0}(S^{k}E_{2}, \theta )$ is of the form
\begin{equation*}
\begin{CD}
 Gr_F^{j} (2 \le j \le k+2): &  &  &  & Gr_F^{1} : &  & Gr_{F}^{0}: \\ \hline
\\
...&& &  &  S^{k-1}\Omega^1_{\overline{X}}(\log D) \otimes L^{3-2k}    & & S^{k}\Omega^1_{\overline{X}}(\log D) \otimes L^{-2k}  \\
\downarrow &&&& \downarrow  &&  \\
...&&  & & S^{k}\Omega^1_{\overline{X}}(\log D) \otimes L^{-2k}
\otimes \Omega^1_{\overline{X}} \\
&& && +  S^{k}\Omega^1_{\overline{X}}(\log D)_{1} \otimes L^{-2k}
\otimes \Omega^1_{\overline{X}}(\log D) \\
\downarrow  &&&&&&  \\
... && \\ \\ \hline
\end{CD}
\end{equation*}

where the column under $Gr_F^{j} (2 \le j \le k+2)$ is given by
$$
 S^{k-j} \Omega_{\overline{X}}^{1}(\log D)
\otimes L^{3j-2k} 
\to S^{k-j+1}\Omega^1_{\overline{X}}(\log D) \otimes
L^{3j-3-2k}\otimes \Omega^1_{\overline{X}}
$$
$$
+ S^{k-j+1}\Omega^1_{\overline{X}}(\log D)_{1} \otimes L^{3j-3-2k}
\otimes \Omega^1_{\overline{X}}(\log D) \to S^{k-j+2}
\Omega_{\overline{X}}^{1}(\log D) \otimes L^{3j-6-2k} \otimes
\Omega^{2}_{\overline{X}}.
$$

\end{prop}

\begin{proof}
This can be worked out using a recent result of El Zein \cite[\S 4]{El1} for the special case where the boundary divisor $D$ is smooth.
\end{proof}


Using these results, we can prove the following theorem:

\begin{theorem}
The mixed Hodge numbers of $H^{l}(X, S^{k}\V_{1}\oplus
S^{k}\V_{2}), 0\leq l \leq 4$ above weight $l+k$ depend only on
the number of boundary components $h$. More precisely: \\
1. For $l=3$, the only non--zero mixed Hodge number is
$h^{k+2,k+2}_{3}=2h.$ \\
2. For $l=2$, the only non--zero mixed Hodge numbers are
$h^{k+2,1}_{2}=h^{1,k+2}_{2}\leq h$ and
$h^{k+2,k+1}_{2}=h^{k+1,k+2}_{2}= h$. \\
Furthermore, if
$$
H^{1}(X, S^{k}\V_{1}\oplus S^{k}\V_{2})=0,
$$
then $h^{k+2,1}_{2}=h^{1,k+2}_{2} = h.$\\
3. For $l=1$ one has
$$
H^{1}(X, S^{k}\V_{1}\oplus S^{k}\V_{2}) = W_{0}H^{1}(X, S^{k}\V_{1}\oplus S^{k}\V_{2}).
$$
\end{theorem}

\begin{proof}  Let $\V:= S^{k}\V_{1}\oplus S^{k}\V_{2}$. Note that:
$$ H^{l}(X, \V)=0 \text{ for } l=0,4,
$$
so we only need to consider the case when $l=1,2,3$. Let
$$ h^{p,q}_{l} := \dim
Gr_{F}^{p}Gr_{\overline{F}}^{q}Gr^{W[l+k]}_{p+q}(H^{k}(X,\V)),$$
$$ h^{p,q}_{l,1} := \dim
Gr_{F}^{p}Gr_{\overline{F}}^{q}Gr^{W[l+k]}_{p+q}(H^{k}(X,S^{k}\V_{1})),$$
$$ h^{p,q}_{l,2} := \dim
Gr_{F}^{p}Gr_{\overline{F}}^{q}Gr^{W[l+k]}_{p+q}(H^{k}(X,S^{k}\V_{2})).$$
Note that $ h^{p,q}_{l}=  h^{p,q}_{l,1}+ h^{p,q}_{l,2}$ and $
h^{q,p}_{l,1}= h^{p,q}_{l,2} $.
From Prop. \ref{weightcomplex1} we deduce that $ h^{p,q}_{l} =0$ for $ p+q
> l+  2k+1 $ or $ p+q < l+k $.
From the calculation of the cohomology of the logarithmic Higgs complex
we conclude that $h^{p,q}_{2,1}=0$ for $0 < p <k+1, p+q > k+2$ and
$h^{p,q}_{3,1}=0$ for  $0 < p <k+2, p+q > k+2$. On the other
hand, by theorem \ref{SKV2}, we know that
$h^{p,q}_{2,2}=0$ for $1 < p <k+2, p+q > k+2$ and $h^{p,q}_{3,2}=0$ for $0 < p <k+2, p+q > k+2$. \\
{\bf Case 1.} $l=1$ \\
Let $X^{*}$ be the Baily-Borel compactification of $X$, and note
that we have the following commutative diagram: \\
\xymatrix{ IH^{1}(X^{*},S^{k}\V_{1}\oplus S^{k}\V_{2} ) \ar[r]
\ar[rd] & IH^{1}(\overline{X},
S^{k}\V_{1}\oplus S^{k}\V_{2}) \ar[d]\\
 &
H^{1}(X, S^{k}\V_{1}\oplus S^{k}\V_{2}).
}\\
Since the singularities of $X^{*}$ are isolated points, we know
that the maps $$IH^{1}(X^{*},S^{k}\V_{1}\oplus S^{k}\V_{2} )
\rightarrow H^{1}(X, S^{k}\V_{1}\oplus S^{k}\V_{2}) $$ are
isomorphisms \cite{AlanH}. Therefore, the map
$$IH^{1}(\overline{X},S^{k}\V_{1}\oplus S^{k}\V_{2} ) \rightarrow
H^{1}(X, S^{k}\V_{1}\oplus S^{k}\V_{2}) $$ is surjective. So we
have $$ H^{1}(X, S^{k}\V_{1}\oplus S^{k}\V_{2}) = W_{0}H^{1}(X,
S^{k}\V_{1}\oplus S^{k}\V_{2}). $$ \noindent
{\bf Case 2.} $l=3$ \\
By Hodge symmetry we have $h^{k+2,q}_{3}= h^{q,k+2}_{3}=0, 0 \leq q \leq k+1$.
So the only unknown Hodge number is $h^{k+2,k+2}_{3}.$ \\
By $E_2$-degeneration of the weight filtration,
$Gr_{F}^{k+2}Gr_{\overline{F}}^{k+2}Gr^{W[3+k]}_{k+2+k+2}(H^{3}(X,\V))=$
$$\frac{{\rm Ker}(
H^{3}(\overline{X},
Gr^{W}_{k+1}Gr_{F}^{k+2}(\Omega^{\cdot}_{\overline{X}}(\log D)
\otimes \overline{\sV}, \nabla)) \rightarrow H^{4}(\overline{X},
Gr^{W}_{k}Gr_{F}^{k+2}(\Omega^{\cdot}_{\overline{X}}(\log D)
\otimes \overline{\sV}, \nabla)))}{{\rm Image}( H^{2}(\overline{X},
Gr^{W}_{k+2}Gr_{F}^{k+2}(\Omega^{\cdot}_{\overline{X}}(\log D)
\otimes \overline{\sV}, \nabla)) \rightarrow H^{3}(\overline{X},
Gr^{W}_{k+1}Gr_{F}^{k+2}(\Omega^{\cdot}_{\overline{X}}(\log D)
\otimes \overline{\sV}, \nabla)))}.
$$
Since
$$
H^{4}(\overline{X}, Gr^{W}_{k}Gr_{F}^{k+2}(\Omega^{\cdot}_{\overline{X}}(\log D)
\otimes \overline{\sV}, \nabla)))= H^{2}(\overline{X},
Gr^{W}_{k+2}Gr_{F}^{k+2}(\Omega^{\cdot}_{\overline{X}}(\log D)
\otimes \overline{\sV}, \nabla)) =0,
$$
we get
$$
Gr_{F}^{k+2}Gr_{\overline{F}}^{k+2}Gr^{W[3+k]}_{k+2+k+2}(H^{3}(X,\V)) = H^{3}(\overline{X},
Gr^{W}_{k+1}Gr_{F}^{k+2}(\Omega^{\cdot}_{\overline{X}}(\log D)
\otimes \overline{\sV}, \nabla)))$$ $$=H^{1}(D, \sO_D) \oplus H^{1}(D,
\sO_D) .
$$
Therefore $h^{k+2,k+2}_{3}=2h$, where $h$ is the number
of connected components contained in $D$. \\
\noindent {\bf Case 3.} $l=2$ \\
By Hodge symmetry we have $h^{p,q}_{2}= h^{q,p}_{2}=0$ for $1 < q < k+1$. 
So the only unknown Hodge numbers are $h^{k+2,k+1}_{2}= h^{k+1,k+2}_{2}, h^{k+1,k+1}_{2}, h^{k+2,1}_{2}=h^{1,k+2}_{2}.$ \\
For $h^{k+2,k+1}_{2}$, we know that $h^{k+2,k+1}_{2,1}=
h^{k+1,k+2}_{2,2}=0$. Hence we only need to compute
$h^{k+2,k+1}_{2,2}$.  Denote by $\overline\sV_{i}, i=1,2 $ the Deligne
extension of $S^{k}\V_{i}$. We have
$Gr_{F}^{k+1}Gr_{\overline{F}}^{k+2}Gr^{W[2+k]}_{k+1+k+2}(H^{2}(X,S^{k}\V_{2}))
= $
$$\frac{{\rm Ker}(
H^{2}(\overline{X},
Gr^{W}_{k+1}Gr_{F}^{k+1}(\Omega^{\cdot}_{\overline{X}}(\log D)
\otimes \overline{\sV_{2}}, \nabla)) \rightarrow H^{3}(\overline{X},
Gr^{W}_{k}Gr_{F}^{k+1}(\Omega^{\cdot}_{\overline{X}}(\log D)
\otimes \overline{\sV_{2}}, \nabla)))}{{\rm Image}( H^{1}(\overline{X},
Gr^{W}_{k+2}Gr_{F}^{k+1}(\Omega^{\cdot}_{\overline{X}}(\log D)
\otimes \overline{\sV_{2}}, \nabla)) \rightarrow H^{2}(\overline{X},
Gr^{W}_{k+1}Gr_{F}^{k+1}(\Omega^{\cdot}_{\overline{X}}(\log D)
\otimes \overline{\sV_{2}}, \nabla)))}. $$
Furthermore,
$$
H^{3}(\overline{X},
Gr^{W}_{k}Gr_{F}^{k+1}(\Omega^{\cdot}_{\overline{X}}(\log D)
\otimes \overline{\sV_{2}}, \nabla))= H^{1}(\overline{X},
Gr^{W}_{k+2}Gr_{F}^{k+1}(\Omega^{\cdot}_{\overline{X}}(\log D)
\otimes \overline{\sV_{2}}, \nabla)) =0,
$$
so we get
$$Gr_{F}^{k+1}Gr_{\overline{F}}^{k+2}Gr^{W[2+k]}_{k+2+k+1}(H^{2}(X,S^{k}\V_{2})) =
H^{2}(\overline{X},
Gr^{W}_{k+1}Gr_{F}^{k+1}(\Omega^{\cdot}_{\overline{X}}(\log D)
\otimes \overline{\sV_{2}}, \nabla))= H^{1}(D, \sO_D). $$ Therefore
$h^{k+2,k+1}_{2,2}=h^{k+2,k+1}_{2}= h^{k+1,k+2}_{2} = h$. \\

Since $h^{k+1,k+1}_{2,2}=0$, one has $h^{k+1,k+1}_{2,1}=0$. Therefore
$h^{k+1,k+1}_{2}=0$.\\

One has
$Gr_{F}^{k+2}Gr_{\overline{F}}^{1}Gr^{W[2+k]}_{k+2+1}(H^{2}(X,\V)) = $
$$\frac{{\rm Ker}(
H^{2}(\overline{X},
Gr^{W}_{1}Gr_{F}^{k+2}(\Omega^{\cdot}_{\overline{X}}(\log D)
\otimes \overline{\sV}, \nabla)) \rightarrow H^{3}(\overline{X},
Gr^{W}_{0}Gr_{F}^{k+2}(\Omega^{\cdot}_{\overline{X}}(\log D)
\otimes \overline{\sV}, \nabla)))}{{\rm Image}( H^{1}(\overline{X},
Gr^{W}_{2}Gr_{F}^{k+2}(\Omega^{\cdot}_{\overline{X}}(\log D)
\otimes \overline{\sV}, \nabla)) \rightarrow H^{2}(\overline{X},
Gr^{W}_{1}Gr_{F}^{k+2}(\Omega^{\cdot}_{\overline{X}}(\log D)
\otimes \overline{\sV}, \nabla)))}. $$ Since $H^{1}(\overline{X},
Gr^{W}_{2}Gr_{F}^{k+2}(\Omega^{\cdot}_{\overline{X}}(\log D)
\otimes \overline{\sV}, \nabla))= 0$,\\
 we have
$$
Gr_{F}^{k+2}Gr_{\overline{F}}^{1}Gr^{W[2+k]}_{k+2+1}(H^{2}(X,\V))
\subset
Gr^{W}_{1}Gr_{F}^{k+2}(\Omega^{\cdot}_{\overline{X}}(\log D)
\otimes \overline{\sV}, \nabla))= H^{0}(D, \sO_D).
$$
Therefore
$h^{k+2,1}_{2}=h^{1,k+2}_{2}\leq h$.
\end{proof}

The inequality in the theorem becomes an equality under the following assumption:

\begin{cor}
If we assume
$$ IH^{3}(X^{*},S^{k}\V_{1}\oplus S^{k}\V_{2} )^{k+2,1} \cong
W_{0}H^{3}(X, S^{k}\V_{1}\oplus S^{k}\V_{2})^{k+2,1}, $$ where the
index $(k+2,1)$ denotes the $(k+2,1)$ Hodge component, then we have
$$
h^{k+2,1}_{2}=h^{1,k+2}_{2}= h.
$$
Note that if
$$
H^{1}(X, S^{k}\V_{1}\oplus S^{k}\V_{2})=0,
$$
then
$$
IH^{3}(X^{*},S^{k}\V_{1}\oplus S^{k}\V_{2} ) \cong IH^{1}(X^{*},S^{k}\V_{1}\oplus S^{k}\V_{2}
) =0,
$$
and hence this assumption is satisfied, since by \cite{ps} this implies
$$
IH^{3}(X^{*},S^{k}\V_{1}\oplus S^{k}\V_{2} )^{k+2,1} \cong
W_{0}H^{3}(X, S^{k}\V_{1}\oplus S^{k}\V_{2})^{k+2,1}=0.
$$
\end{cor}

\begin{proof} From the Higgs complex and the proof of the theorem it follows that
$$
\dim IH^{1}(\overline{X},S^{k}\V_{1}\oplus S^{k}\V_{2} )^{0,k+1}=\dim W_{0}H^{1}(X, S^{k}\V_{1}\oplus S^{k}\V_{2})^{0,k+1}.
$$
Using the assumption (and \cite{ps}) we obtain therefore
$$
\dim IH^{3}(X^{*},S^{k}\V_{1}\oplus S^{k}\V_{2} )^{k+2,1}= \dim W_{0}H^{3}(X, S^{k}\V_{1}\oplus
S^{k}\V_{2})^{k+2,1}
$$
$$
\leq \dim IH^{3}(\overline{X},S^{k}\V_{1}\oplus S^{k}\V_{2} )^{k+2,1} =
\dim IH^{1}(\overline{X},S^{k}\V_{1}\oplus S^{k}\V_{2} )^{0,k+1}
$$
$$
= \dim W_{0}H^{1}(X, S^{k}\V_{1}\oplus S^{k}\V_{2})^{0,k+1} =
\dim IH^{1}(X^{*},S^{k}\V_{1}\oplus S^{k}\V_{2} )^{0,k+1}.
$$
Since by duality
$$
\dim IH^{3}(X^{*},S^{k}\V_{1}\oplus S^{k}\V_{2} )^{k+2,1}
= \dim IH^{1}(X^{*},S^{k}\V_{1}\oplus S^{k}\V_{2} )^{0,k+1},
$$
we have equality in the above inequality and hence
$$
\dim W_{0}H^{3}(X, S^{k}\V_{1}\oplus S^{k}\V_{2})^{k+2,1}=
\dim IH^{3}(\overline{X},S^{k}\V_{1}\oplus S^{k}\V_{2} )^{k+2,1}.
$$
Then the map
 $$H^{2}(\overline{X},
Gr^{W}_{1}Gr_{F}^{k+2}(\Omega^{\cdot}_{\overline{X}}(\log D)
\otimes \overline{\sV}, \nabla)) \rightarrow H^{3}(\overline{X},
Gr^{W}_{0}Gr_{F}^{k+2}(\Omega^{\cdot}_{\overline{X}}(\log D)
\otimes \overline{\sV}, \nabla)))$$ is trivial.
 We have
$$Gr_{F}^{k+2}Gr_{\overline{F}}^{1}Gr^{W[2+k]}_{k+2+1}(H^{2}(X,\V))
\cong
Gr^{W}_{1}Gr_{F}^{k+2}(\Omega^{\cdot}_{\overline{X}}(\log D)
\otimes \overline{\sV}, \nabla))= H^{0}(D, \sO_D). $$ Therefore
$h^{k+2,1}_{2}=h^{1,k+2}_{2}= h$.
\end{proof}

\subsection{Case 2: ${\rm End}^0(\V_1)={\rm End}^0(\V_2)$ over ball
quotient surfaces} \ \ \\

\begin{theorem}
The mixed Hodge numbers of $H^{l}(X, {\rm End}^0(\V_1)), 0\leq l
\leq 4$ above weight $l+2$ depend only on the
number of boundary components $h$. More precisely: \\
1. For $l=3$, the nonzero mixed Hodge number is only
$h^{4,4}_{3}=h.$ \\
2. For $l=2$, the nonzero mixed Hodge numbers are only
$h^{4,2}_{2}=h^{2,4}_{2}= h.$ \\
\end{theorem}

\begin{proof} We proceed as in case 1, and first compute the weight filtration
on the logarithmic Higgs complex of ${\rm End}^0(E_1)= {\rm End}^0(E_2) \subset E_{1} \otimes E_{2}$ which
is the logarithmic Higgs bundle associated to the Deligne extension of $\V:={\rm End}^0(\V_1)={\rm End}^0(\V_2)$.
\\

$W_{3}({\rm End}^0(E_1), \theta ):$ \\
\begin{tiny}
\begin{equation*}
\begin{CD}
Gr_{F}^{4 }: && Gr_{F}^{3 }: && Gr_{F}^{2 }: && Gr_{F}^{1 }: &&
Gr_{F}^{0 }: \\ \hline
 &&&& \Omega^1_{\overline{X}}(\log D) &\oplus &
\Omega^1_{\overline{X}}(\log
D) ^{\otimes 2} \otimes L^{-3}  &\oplus& \Omega^1_{\overline{X}}(\log D) \otimes L^{-3}  \\
&&&& \downarrow &&\downarrow \cong &&  \\
&&\Omega^1_{\overline{X}}(\log D)^{\otimes 2} &\oplus &
\Omega^1_{\overline{X}}(\log D)^{ \otimes 3} \otimes L^{-3}
&\oplus& \Omega^1_{\overline{X}}(\log D)^{\otimes 2} \otimes
L^{-3}
\\ && \downarrow \cong && \downarrow  &&   \\  \Omega^1_{\overline{X}}(\log D)\otimes L^3 &\oplus& \Omega^1_{\overline{X}}(\log D)^{\otimes
2}  &\oplus&
\Omega^1_{\overline{X}}(\log D)  \\ \hline
\end{CD}
\end{equation*}
\end{tiny}
$W_{2}({\rm End}^0(E_1), \theta ):$ \\

\begin{tiny}
\begin{equation*}
\begin{CD}
Gr_{F}^{4 }: && Gr_{F}^{3 }: && Gr_{F}^{2 }: && Gr_{F}^{1 }: &&
Gr_{F}^{0 }: \\ \hline
&&&& \Omega^1_{\overline{X}}(\log D) &\oplus &
\Omega^1_{\overline{X}}(\log
D) ^{\otimes 2} \otimes L^{-3}  &\oplus& \Omega^1_{\overline{X}}(\log D) \otimes L^{-3}  \\
&&&& \downarrow &&\downarrow \cong &&  \\ &&
\Omega^1_{\overline{X}} \otimes \Omega^1_{\overline{X}}(\log D) +
\Omega^1_{\overline{X}}(\log D) \otimes \Omega^1_{\overline{X}}
&\oplus & \Omega^1_{\overline{X}}(\log D)^{ \otimes 3} \otimes
L^{-3} &\oplus& \Omega^1_{\overline{X}}(\log D)^{\otimes
2} \otimes L^{-3}  \\
 && \downarrow  && \downarrow  &&   \\
\Omega^1_{\overline{X}}\otimes L^3 &\oplus&
\Omega^1_{\overline{X}}(\log D)^{\otimes 2}  &\oplus&
\Omega^1_{\overline{X}}(\log D)  \\ \hline
\end{CD}
\end{equation*}
\end{tiny}

$W_{1}({\rm End}^0(E_1), \theta ):$ \\

\begin{tiny}
\begin{equation*}
\begin{CD}
Gr_{F}^{4 }: && Gr_{F}^{3 }: && Gr_{F}^{2 }: && Gr_{F}^{1 }: &&
Gr_{F}^{0 }: \\ \hline
&&&& \Omega^1_{\overline{X}}(\log D) &\oplus &
\Omega^1_{\overline{X}}(\log
D) ^{\otimes 2} \otimes L^{-3}  &\oplus& \Omega^1_{\overline{X}}(\log D) \otimes L^{-3} \\
 &&&& \downarrow &&\downarrow \cong &&  \\ &&
\Omega^1_{\overline{X}}(\log D) \otimes \Omega^1_{\overline{X}}
&\oplus & ( \Omega^1_{\overline{X}}(\log D) \otimes
\Omega^1_{\overline{X}} \otimes \Omega^1_{\overline{X}}(\log D)
&\oplus&
 \Omega^1_{\overline{X}}(\log
D)^{\otimes
2} \otimes L^{-3}  \\
 &&&& + \Omega^1_{\overline{X}}\otimes
\Omega^1_{\overline{X}}(\log D)^{\otimes 2} \\
&&&&  + \Omega^1_{\overline{X}}(\log D)^{\otimes 2} \otimes \Omega^1_{\overline{X}}) \otimes L^{-3} \\
 && \downarrow  && \downarrow  &&   \\
\Omega^1_{\overline{X}}(\log D) \otimes \Omega^2_{\overline{X}}
&\oplus&
  \Omega^1_{\overline{X}} \otimes  \Omega^1_{\overline{X}}(\log D) &\oplus&
\Omega^1_{\overline{X}}(\log D)   \\
&& + \Omega^1_{\overline{X}}(\log D) \otimes
\Omega^1_{\overline{X}} \\ \hline
\end{CD}
\end{equation*}
\end{tiny}

$W_{0}({\rm End}^0(E_1), \theta ):$ \\
\begin{tiny}
\begin{equation*}
\begin{CD}
Gr_{F}^{4 }: && Gr_{F}^{3 }: && Gr_{F}^{2 }: && Gr_{F}^{1 }: &&
Gr_{F}^{0 }: \\
&&&& \Omega^1_{\overline{X}}(\log D) &\oplus &
\Omega^1_{\overline{X}}(\log
D) ^{\otimes 2}  \otimes L^{-3}  &\oplus& \Omega^1_{\overline{X}}(\log D)  \otimes L^{-3}  \\
&&&& \downarrow &&\downarrow \cong &&  \\
&& \Omega^1_{\overline{X}}(\log D) \otimes \Omega^1_{\overline{X}}
&\oplus & (\Omega^1_{\overline{X}}(\log D) \otimes
\Omega^1_{\overline{X}} \otimes \Omega^1_{\overline{X}}(\log D)
&\oplus& \Omega^1_{\overline{X}}(\log D)^{\otimes
2}\otimes L^{-3}  \\
&&&&+\Omega^1_{\overline{X}}(\log D)^{ \otimes
2} \otimes \Omega^1_{\overline{X}})\otimes L^{-3} && \\
 && \downarrow  \cong && \downarrow  &&   \\
\Omega^1_{\overline{X}}(\log D)  \otimes  \Omega^2_{\overline{X}}
 &\oplus&
  \Omega^1_{\overline{X}} (\log D) \otimes
 \Omega^1_{\overline{X}}  &\oplus&
\Omega^1_{\overline{X}}(\log D)  \\  \hline
\end{CD}
\end{equation*}
\end{tiny}

Note that:
$$ H^{l}(X, \V)=0, \text{ for } l=0,1,4,$$
so we only need to consider the case when $l=2,3$. Let
$$ h^{p,q}_{l} := \dim
Gr_{F}^{p}Gr_{\overline{F}}^{q}Gr^{W[l+2]}_{p+q}(H^{l}(X,\V)).$$ From
the above calculation, we know that $ h^{p,q}_{l} =0$ for $ p+q
> l+2+3 $ or $ p+q < l+2 $.\\
Denote the logarithmic de Rham complex of $\overline{\sV}$ by
$(\Omega^{\cdot}_{\overline{X}}(\log D) \otimes \overline{\sV}, \nabla)$.\\

1. $l=3$ \\
We use the calculation of the weight filtration on the logarithmic
Higgs complex in Prop.~\ref{weightcomplex1} together with the
vanishing result Cor.~\ref{vancor}. This implies the exactness of
the graded subquotients $Gr_{F}^{3}$ and $Gr_{F}^{1}$ of
$W_{3}({\rm End}^0(E_1), \theta )$, and hence that all the Hodge
numbers $h^{3,5}_{3}$, $h^{6,2}_{3}$, $h^{1,7}_{3}$,
$h^{8,0}_{3}$, $h^{3,4}_{3}$, $h^{5,2}_{3}$, $h^{1,6}_{3}$,
$h^{7,0}_{3}$, $h^{3,3}_{3}$, $h^{4,2}_{3}$, $h^{1,5}_{3}$ and
$h^{6,0}_{3}$ are $0$. Consequently, by Hodge
symmetry, we only need to compute $h^{4,4}_{3}$. \\
By $E_2$-degeneration of the weight filtration, $
Gr_{F}^{4}Gr_{\overline{F}}^{4}Gr^{W[3+2]}_{4+4}(H^{3}(X,\V)) = $
 $$\frac{{\rm Ker}(
H^{3}(\overline{X},
Gr^{W}_{3}Gr_{F}^{4}(\Omega^{\cdot}_{\overline{X}}(\log D) \otimes
\overline{\sV}, \nabla)) \rightarrow H^{4}(\overline{X},
Gr^{W}_{2}Gr_{F}^{4}(\Omega^{\cdot}_{\overline{X}}(\log D) \otimes
\overline{\sV}, \nabla)))}{{\rm {\rm Image}}( H^{2}(\overline{X},
Gr^{W}_{4}Gr_{F}^{4}(\Omega^{\cdot}_{\overline{X}}(\log D) \otimes
\overline{\sV}, \nabla)) \rightarrow H^{3}(\overline{X},
Gr^{W}_{3}Gr_{F}^{4}(\Omega^{\cdot}_{\overline{X}}(\log D) \otimes
\overline{\sV}, \nabla)))}. $$ Since $H^{4}(\overline{X},
Gr^{W}_{2}Gr_{F}^{4}(\Omega^{\cdot}_{\overline{X}}(\log D) \otimes
\overline{\sV}, \nabla)))= H^{2}(\overline{X},
Gr^{W}_{4}Gr_{F}^{4}(\Omega^{\cdot}_{\overline{X}}(\log D) \otimes
\overline{\sV}, \nabla)) =0$, we get
$$Gr_{F}^{4}Gr_{\overline{F}}^{4}Gr^{W[3+2]}_{4+4}(H^{3}(X,\V)) = H^{3}(\overline{X},
Gr^{W}_{3}Gr_{F}^{4}(\Omega^{\cdot}_{\overline{X}}(\log D) \otimes
\overline{\sV}, \nabla)))=H^{1}(D,L^{\otimes 3}_{|D}).$$  Therefore
$h^{4,4}_{3}=h$, where $h$ is the number of connected components of $D$. \\

\noindent 2. $l=2$ \\
As in the case $l=3$ we know by Prop.~\ref{weightcomplex1}
together with Cor.~\ref{vancor} that all the Hodge numbers
$h^{3,4}_{2}$, $h^{5,2}_{2}$, $h^{1,6}_{2}$, $h^{7,0}_{2}$,
$h^{3,3}_{2}$, $h^{1,5}_{2}$, $h^{6,0}_{2}$, $h^{3,2}_{2}$,
$h^{1,4}_{2}$, $h^{5,0}_{2}$ are zero.
So by Hodge symmetry we only need to compute $h^{4,2}_{2}$. \\
Since $Gr_{F}^{4}Gr_{\overline{F}}^{2}Gr^{W[2+2]}_{4+2}(H^{2}(X,\V)) =
$
$$\frac{{\rm Ker}(
H^{2}(\overline{X},
Gr^{W}_{2}Gr_{F}^{4}(\Omega^{\cdot}_{\overline{X}}(\log D) \otimes
\overline{\sV}, \nabla)) \rightarrow H^{3}(\overline{X},
Gr^{W}_{1}Gr_{F}^{4}(\Omega^{\cdot}_{\overline{X}}(\log D) \otimes
\overline{\sV}, \nabla)))}{{\rm {\rm Image}}( H^{1}(\overline{X},
Gr^{W}_{3}Gr_{F}^{4}(\Omega^{\cdot}_{\overline{X}}(\log D) \otimes
\overline{\sV}, \nabla)) \rightarrow H^{2}(\overline{X},
Gr^{W}_{2}Gr_{F}^{4}(\Omega^{\cdot}_{\overline{X}}(\log D) \otimes
\overline{\sV}, \nabla)))}. $$

In addition, $H^{3}(\overline{X},
Gr^{W}_{1}Gr_{F}^{4}(\Omega^{\cdot}_{\overline{X}}(\log D) \otimes
\overline{\sV}, \nabla))= H^{1}(\overline{X},
Gr^{W}_{3}Gr_{F}^{4}(\Omega^{\cdot}_{\overline{X}}(\log D) \otimes
\overline{\sV}, \nabla)) =0,$ hence we get
$$Gr_{F}^{4}Gr_{\overline{F}}^{2}Gr^{W[2+2]}_{4+2}(H^{2}(X,\V)) =
H^{2}(\overline{X},
Gr^{W}_{2}Gr_{F}^{4}(\Omega^{\cdot}_{\overline{X}}(\log D) \otimes
\overline{\sV}, \nabla))= H^{0}(D, \sO_D). $$ Therefore
$h^{4,2}_{2}=h^{2,4}_{2}= h$, where $h$ is the number of connected
components of $D$.
\end{proof}

\medskip {\bf Acknowledgement:}  We thank Jian--Shu Li for discussions in Hongkong and a referee for helpful comments.
This work was supported in DFG Schwerpunkt program ``Global methods in complex geometry''
and Sonderforschungsbereich SFB/TRR 45.

\end{document}